\theoremstyle{plain}
\newtheorem{theorem}{Theorem}[section]
\newtheorem{lemma}[theorem]{Lemma}
\newtheorem{prop}[theorem]{Proposition}
\theoremstyle{definition}
\newtheorem{remark}[theorem]{Remark}
\newtheorem{definition}[theorem]{Definition}
\numberwithin{equation}{section}
\def\be{\begin{equation}}
\def\ee{\end{equation}}
\begin{document}

\title[Singular Solutions]
{Singular Solutions to the Yamabe Equation
with Prescribed Asymptotics}
\author[Qing Han]{Qing Han}
\address{Department of Mathematics\\
University of Notre Dame\\
Notre Dame, IN 46556} \email{qhan@nd.edu}
\author[Yichao Li]{Yichao Li}
\address{Department of Mathematics\\
University of Notre Dame\\
Notre Dame, IN 46556} \email{yli20@nd.edu}

\begin{abstract}
We study positive solutions of the Yamabe equation with isolated singularity and prove the 
existence of solutions with prescribed asymptotic expansions near singular points 
and an arbitrarily high order of approximation. 
\end{abstract}

\thanks{The first author acknowledges the support by the NSF
Grant DMS-1404596
and the second author acknowledges the support by the NSF grant DMS-1569162.}

\maketitle

\section{Introduction}\label{sec-introduction} 

In this paper, we study positive solutions of the Yamabe equation of the form
\begin{equation}\label{eq-Yamabe} 
\Delta u+\frac14n(n-2)u^{\frac{n+2}{n-2}}=0,\end{equation}
which are defined in the punctured ball $B_1\setminus\{0\}$, with a nonremovable singularity at the origin. 
Geometrically, for any positive solution $u$ of the equation \eqref{eq-Yamabe}, 
the corresponding conformal metric 
$g=u^{\frac4{n-2}}|dx|^2$
has a constant  scalar curvature $R_g=n(n-1)$.

In a pioneering paper \cite{CaffarelliGS1989}, Caffarelli,  Gidas,  and Spruck 
proved that positive singular solutions of \eqref{eq-Yamabe} in $B_1\setminus\{0\}$
are asymptotic to radial singular solutions of \eqref{eq-Yamabe} in $\mathbb R^n\setminus\{0\}$. 
Specifically, a positive solution $u$ of \eqref{eq-Yamabe} in $B_1\setminus \{0\}$, 
with a nonremovable singularity at the origin, 
satisfies 
\begin{equation}\label{eq-estimate-u-0}
|x|^{\frac{n-2}2}u(x)-\psi(-\ln|x|)\to 0\quad\text{as }x\to 0,\end{equation}
where $|x|^{\frac{2-n}2}\psi(-\ln|x|)$ is a positive radial solution of \eqref{eq-Yamabe} 
in $\mathbb R^n\setminus\{0\}$, with a nonremovable singularity at the origin. In fact, 
$\psi$ is a positive periodic function in $\mathbb R$. A key ingredient in the proof is a 
``measure theoretic" version of the moving plane technique.

Subsequently in \cite{KorevaarMPS1999}, Korevaar, Mazzeo, Pacard, and Schoen 
studied refined asymptotics and expanded solutions to the next order. Their result states 
that 
a positive solution $u$ of \eqref{eq-Yamabe} in $B_1\setminus \{0\}$, 
with a nonremovable singularity at the origin, 
satisfies, for some constant $\alpha>1$ and any $x\in B_{1/2}\setminus\{0\}$, 
\begin{equation}\label{eq-estimate-u-1}
\big||x|^{\frac{n-2}2}u(x)-\psi(-\ln|x|)-\phi(-\ln|x|)P_1(x)\big|\le C|x|^{\alpha},\end{equation}
where $\psi$ is the function as in \eqref{eq-estimate-u-0}, 
$P_1$ is a linear function, $\phi$ is a function given by 
$$\phi=-\psi'+\frac{n-2}2\psi.$$
The proof of \eqref{eq-estimate-u-1} is based on the classification of 
global singular solutions and analysis of linearized operators at these global singular solutions. 

There have been many results related to the theme of estimates 
\eqref{eq-estimate-u-0} and \eqref{eq-estimate-u-1}. Han, Li, and Teixeira \cite{HanLi2010} studied 
the $\sigma_k$-Yamabe equation near isolated singularities and derived similar estimates for 
its solutions. 
Caffarelli, Jin, Sire, and Xiong \cite{CaffarelliJSX2014} studied 
fractional semi-linear elliptic
equations with isolated singularities. Refer to \cite{HanLi2010} and \cite{CaffarelliJSX2014} 
for more references on these subjects. 

In \cite{HLL201?}, we studied expansions of positive singular 
solutions of the Yamabe equation \eqref{eq-Yamabe} and the $\sigma_k$-Yamabe equation in $B_1\setminus\{0\}$ 
up to  arbitrary orders. 
Let  $u$ be a positive solution of \eqref{eq-Yamabe} in $B_1\setminus\{0\}$,
with a nonremovable singularity at the origin, and $\psi$ 
be the positive periodic function as in \eqref{eq-estimate-u-0}.
We proved that $\psi$ determines a 
positive sequence $\{\mu_k\}$, strictly increasing and divergent to $\infty$, which we
call the {\it index set} associated with $\psi$. 
This index set determines the decay rate of $|x|^{\frac{n-2}2}u(x)$ in the following order: 
\begin{align*}
|x|^0,\, |x|^{\mu_1},\, (-\log|x|)|x|^{\mu_2},\, |x|^{\mu_2},\,\cdots,\,
(-\log|x|)^{k-1}|x|^{\mu_k},\,\cdots,\, |x|^{\mu_k},\, \cdots.\end{align*}
Specifically, we proved, 
for any positive integer $k$ 
and any $x\in B_{1/2}\setminus\{0\}$, 
\begin{align}\label{eq-estimate-u-k}
\Big||x|^{\frac{n-2}2}u(x)-\psi(-\ln|x|)-\sum_{i=1}^k\sum_{j=0}^{i-1} 
c_{ij}(x)|x|^{\mu_i}(-\ln|x|)^j\Big|
\le C|x|^{\mu_{k+1}}(-\ln|x|)^k,
\end{align}
where  
$c_{ij}$ is a bounded smooth function in $B_{1/2}\setminus\{0\}$, 
for each $i=1, \cdots, k$ and $j=0\cdots, i-1$. 
These functions are determined by  $\psi$, up to the kernel of the linearized Yamabe equation at $\psi$. They 
can be computed in a rather mechanical way and have two sources, 
the kernel of the linearized equation and the nonlinearity. Naturally, 
the expression 
\begin{equation}\label{eq-def-approximation-u}
|x|^{-\frac{n-2}2}\big[\psi(-\ln|x|)+\sum_{i=1}^k\sum_{j=0}^{i-1} 
c_{ij}(x)|x|^{\mu_i}(-\ln|x|)^j\big]\end{equation}
is considered as an approximate solution of \eqref{eq-Yamabe}. The estimate \eqref{eq-estimate-u-k}
simply asserts that any positive solutions of \eqref{eq-Yamabe} with a nonremovable singularity at the origin
are well approximated by an approximate solution. 

It is also important to study the converse question. Can we find solutions of \eqref{eq-Yamabe} 
near approximate solutions? Mazzeo and Pacard \cite{MazzeoP1999} 
constructed complete conformal metrics with positive scalar curvature in the complement of a finite union of 
lower dimensional submanifolds on the unit sphere. In terms of the equation \eqref{eq-Yamabe}, 
they constructed a positive solution of \eqref{eq-Yamabe} 
in the complement of finitely many points in $\mathbb R^n$, with nonremovable singularity near each of 
these points and with an appropriate order of decay near infinity. 
They started with radial solutions in sufficiently small balls around 
singular points with sufficiently small neck sizes, and extended to the rest of $\mathbb R^n$ by harmonic functions 
which decay near infinity with an appropriate order. These functions serve as global approximate solutions. 
Then, they introduced weighted H\"older spaces and proved the existence of desired solutions. 

In this paper, we study a refined local version. Let $\widehat u$ be an approximate solution of \eqref{eq-Yamabe} of certain order. 
Can we find a solution $u$ of \eqref{eq-Yamabe} which is well approximated by $\widehat u$? 
To answer this question, we need to introduce 
a correct version of approximate solutions, which are modeled after  \eqref{eq-def-approximation-u}. 
The expression \eqref{eq-def-approximation-u} indicates that approximate solutions should have a leading term 
and an \lq\lq order\,\rq\rq. 
The order can be defined as the vanishing order of the left-hand side of \eqref{eq-Yamabe} evaluated at 
the approximate solution, modular a conformal factor. 


To state the main result, it is convenient to introduce the operator
\begin{equation}\label{eq-Yamabe-operator-u} 
\mathcal M(u)=\Delta u+\frac14n(n-2)u^{\frac{n+2}{n-2}}.\end{equation}
The main result is the following theorem. 

\begin{theorem}\label{mainthm}
Let $|x|^{-\frac{n-2}2}\psi(-\ln|x|)$ be a solution of \eqref{eq-Yamabe} 
in $\mathbb R^n\setminus\{0\}$ for some positive periodic function $\psi$ on $\mathbb R$, 
$\mathcal I$ the index set associated with $\psi$, and $\mu>0$ a constant 
with $\mu\notin \mathcal I$. Suppose $\widehat u$ is a smooth function 
 in $B_1\setminus\{0\}$ satisfying 
\begin{equation}\label{eq-assumption-leading-u}
\big||x|^{\frac{n-2}2}\widehat u(x)-\psi(-\ln|x|)\big|
+|x|\big|\nabla(|x|^{\frac{n-2}2}\widehat u(x)-\psi(-\ln|x|))\big|\to 0\quad\text{as }x\to 0,\end{equation}
and 
\begin{equation}\label{eq-assumption-approximate-u}
|x|^{\frac{n+2}2}\big(|\mathcal{M}(\widehat u)(x)|+|x||\nabla(\mathcal{M}(\widehat u))(x)|\big)\le C|x|^\mu,\end{equation}
for some positive constant $C$. 
Then, there exist a constant $R\in (0,1)$ and a positive solution $u$ of \eqref{eq-Yamabe} in $B_R\setminus\{0\}$
such that, for any $x\in B_R\setminus \{0\}$, 
\begin{equation*}
|x|^{\frac{n-2}2}|u(x)-\widehat u(x)|\leq C' |x|^\mu,
\end{equation*}
where $C'$ is a positive constant.
\end{theorem}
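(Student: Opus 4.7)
The plan is to find $u$ as a perturbation of $\widehat u$ by a contraction mapping in weighted Hölder spaces on $B_R\setminus\{0\}$. Writing $u = \widehat u + v$ and expanding
\begin{equation*}
\mathcal M(\widehat u + v) = \mathcal M(\widehat u) + L_{\widehat u}\, v + N(v),
\end{equation*}
where $L_{\widehat u} = \Delta + \tfrac14 n(n+2)\widehat u^{4/(n-2)}$ and $N(v)$ collects the superlinear remainder of $(\widehat u+v)^{(n+2)/(n-2)}$, equation \eqref{eq-Yamabe} becomes $L_{\widehat u}v = -\mathcal M(\widehat u) - N(v)$. I would work in weighted Hölder spaces $C^{k,\alpha}_\nu(B_R\setminus\{0\})$ defined by dyadic rescaling, in which a function behaving like $|x|^\nu$ has unit norm. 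The conclusion to be proved is $\|v\|_{C^{2,\alpha}_{\mu-(n-2)/2}} \le C'$, while hypothesis \eqref{eq-assumption-approximate-u} places $\mathcal M(\widehat u)$ in $C^{0,\alpha}_{\mu-(n+2)/2}(B_R\setminus\{0\})$ with bounded norm. The weights match because $\Delta$ and multiplication by $u_\psi^{4/(n-2)}\sim|x|^{-2}$ both lower weight by $2$.

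The heart of the argument is the construction of a bounded right inverse of the linearization $L_\psi := \Delta + \tfrac14 n(n+2)u_\psi^{4/(n-2)}$ at $u_\psi(x) := |x|^{-(n-2)/2}\psi(-\log|x|)$, as a map $C^{0,\alpha}_{\mu-(n+2)/2}\to C^{2,\alpha}_{\mu-(n-2)/2}$ on $B_R\setminus\{0\}$, under the hypothesis $\mu\notin\mathcal I$. I would carry this out by passing to cylindrical coordinates $t = -\log|x|$, expanding into spherical harmonics on $S^{n-1}$ to reduce to a countable family of second-order linear ODEs with periodic coefficients, and solving each mode using Floquet theory. The indicial exponents produced by these ODEs comprise the index set $\mathcal I$, and the non-coincidence of $\mu$ with any element of $\mathcal I$ is precisely what guarantees a resonance-free solution with the expected weighted bound (no logarithmic loss). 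High-frequency modes are handled by elliptic scaling estimates on dyadic annuli and are uniformly well behaved.

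Using \eqref{eq-assumption-leading-u}, the potential of $L_{\widehat u}$ differs from that of $L_\psi$ by a quantity that is $o(|x|^{-2})$ as $|x|\to 0$, so shrinking $R$ makes this difference arbitrarily small in the relevant operator norm and a Neumann series upgrades $L_\psi^{-1}$ to a right inverse of $L_{\widehat u}$ on $B_R$ with comparable norm. I would then set $T(v) = L_{\widehat u}^{-1}\bigl(-\mathcal M(\widehat u)-N(v)\bigr)$ and verify, on a ball of radius $O(C)$ in $C^{2,\alpha}_{\mu-(n-2)/2}$, that $T$ is a self-map and a contraction. The required estimate $\|N(v)-N(w)\|_{C^{0,\alpha}_{\mu-(n+2)/2}}\le C(\|v\|+\|w\|)\|v-w\|$ follows from Taylor expansion of $s\mapsto(\widehat u+s)^{(n+2)/(n-2)}$ around $\widehat u$, using that perturbations with weight $\mu-(n-2)/2$ and $\mu>0$ are genuinely small compared to the baseline $\widehat u\sim|x|^{-(n-2)/2}$ after shrinking $R$. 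Positivity of $u = \widehat u+v$ follows from the positivity of $\psi$ and the smallness of $v$.

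The main obstacle is the sharp weighted linear theory for $L_\psi$ at weights outside $\mathcal I$: one must identify, mode by mode on each spherical harmonic, the correct projection away from the kernel so that the inverse produces a solution with clean $|x|^\mu$ behavior rather than contamination by powers $|x|^{\mu_k}$ from $\mathcal I$. This is essentially the Fredholm-theoretic content behind the expansions established in the earlier work \cite{HLL201?}, and I expect the spectral analysis developed there to supply the required right inverse directly. Once that linear machinery is in place, the rest is a routine fixed-point scheme in weighted spaces in the spirit of \cite{MazzeoP1999}.
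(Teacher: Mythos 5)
Your outline follows the same architecture as the paper: pass to the cylinder via $v=r^{\frac{n-2}2}u(r\theta)$, construct a bounded right inverse of the linearization at the periodic (Delaunay-type) solution $\psi$ in weighted H\"older spaces, using the spherical-harmonic decomposition and the non-resonance condition $\mu\notin\mathcal I$ (which in particular gives $\mu\neq\rho_i$), and then close with a fixed-point argument whose smallness comes from shrinking $R$ (equivalently taking $t_0$ large) together with \eqref{eq-assumption-leading-u} and \eqref{eq-assumption-approximate-u}. Your decision to invert $L_{\widehat u}$ by a Neumann series around $L_\psi^{-1}$, rather than, as the paper does, keep the difference of potentials inside the nonlinear term $P(w)=wQ(w)$ and absorb it into the contraction, is only a bookkeeping difference; the low-mode ODE analysis via variation of parameters (your ``Floquet'' step) is exactly Lemma \ref{lemma-solution-finite-dim}.

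The genuine gap is in your treatment of the infinite-dimensional (high-frequency) part of the linear theory. ``Elliptic scaling estimates on dyadic annuli'' only bound the weighted $C^{2,\alpha}$ norm of $w$ by its weighted $C^0$ norm plus the data (this is the paper's Lemma \ref{lemma-estimate-PDE}); they do not supply the crucial a priori bound $\sup e^{\mu t}|w|\le C\sup e^{\mu t}|f|$ with $C$ independent of $t_0$ (i.e.\ of $R$), and no maximum principle or barrier is available for $L$ because the zero-order coefficient $\frac14 n(n+2)\psi^{\frac4{n-2}}-\frac14(n-2)^2$ has the wrong sign. Producing that uniform weighted estimate is precisely the paper's main technical work: existence is obtained on finite cylinders by minimizing a coercive energy in the subspace orthogonal to the modes with $\rho_i<\mu$ (coercive since $\lambda_i\ge 2n$ and $0<\psi<1$, which also forces $\mu>1$), the uniform weighted sup bound is then proved by a blow-up/rescaling compactness argument (Lemma \ref{lemma-estimate-L-infty}), and the solution on the half-cylinder is a diagonal limit as $T\to\infty$ (Lemma \ref{lemma-solution-infinite-dim}). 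Your mode-by-mode plan does give each coefficient $w_i$ with a good one-dimensional bound, but summing mode-wise sup bounds over infinitely many spherical harmonics does not converge to a bound on $w$ in $C^0_\mu$, let alone $C^{2,\alpha}_\mu$; you need either the paper's projected variational construction plus rescaling argument or an explicit substitute (for instance weighted $L^2$ estimates uniform in the mode, summed and then upgraded by local Schauder theory). As written, the assertion that the high-frequency modes are ``uniformly well behaved'' assumes the key estimate rather than proving it.
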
 

Theorem \ref{mainthm} asserts that if $\widehat u$ is an approximate solution to the Yamabe equation which is close 
to a radial solution up to certain orders, then $\widehat u$ is close to an actual solution up to an appropriate order. 
The index set $\mathcal I$ will be defined in \eqref{eq-def-index} and is the collection of $\{\mu_i\}$ as in 
\eqref{eq-estimate-u-k}. The assumptions \eqref{eq-assumption-leading-u} and \eqref{eq-assumption-approximate-u}
simply say that $\widehat u$ has a leading term  $|x|^{-\frac{n-2}2}\psi(-\ln|x|)$ and is an approximate solution of 
\eqref{eq-Yamabe} of order $\mu$. Theorem \ref{mainthm} asserts that there always exists a positive solution of the
Yamabe equation near an approximate solution with the correct order of approximation. 

To prove Theorem \ref{mainthm}, we study the linearized operator in appropriate weighted H\"older spaces
in a ball centered at the singular point and construct solutions which decay up to order $\mu$,
the designated order of the approximation. To this end, we first project the linearized equation into a finite dimensional subspace, 
which is spanned by spherical harmonics whose degrees are less than a certain constant depending on $\mu$ 
through {\it indicial roots} of the linearized equation, and construct desired solutions in this subspace 
by solving finitely many ordinary differential equations. In the infinite dimensional complimentary subspace, we construct 
desired solutions by the method of variation, and derive a crucial estimate by a rescaling argument, since the maximum 
principle is not applicable due to the \lq\lq incorrect\rq\rq\, sign of the coefficient of the zero order term. 
It is important that such an estimate hold in balls uniformly independent of the size of the ball. 

The method in this paper provides a general procedure to construct solutions of nonlinear equations 
from solutions of their linearized equations. In general, solutions of linearized equations are not approximate solutions 
of nonlinear equations up to a designated order. We  need to correct solutions of linearized equations first to get 
approximate solutions of nonlinear equations  and then to perturb approximate solutions to get actual solutions.


The paper is organized as follows. 
In Section \ref{sec-Linearized}, we solve the linearized Yamabe equation
in appropriate weighted H\"older spaces and derive all necessary estimates. In Section \ref{sec-approximate-solution}, 
we 
prove Theorem \ref{mainthm} by the contraction mapping principle. 

\section{Linearized Yamabe Equations}\label{sec-Linearized}

In this section, we solve the linearized Yamabe equation in appropriate weighted H\"older spaces. 
It is crucial to introduce appropriate boundary conditions to have desired estimates. 
Some of these estimates 
are not {\it a priori}, and hold only for specially constructed solutions.

We first recall some basic results concerning the linearized Yamabe equations. 
Following \cite{KorevaarMPS1999}, we express the equation \eqref{eq-Yamabe}
in the cylindrical metric $dt^2+d\theta^2$ on $\mathbb R_+\times\mathbb S^{n-1}$. Using the polar coordinates 
$x=r\theta$, with $r=|x|$ and $\theta\in\mathbb S^{n-1}$,  
and the cylindrical variable $t=-\ln r$, we set 
\begin{equation}\label{eq-def-v}v(t, \theta)=r^{\frac{n-2}2}u(r\theta).\end{equation}
A straightforward calculation  transforms the equation 
\eqref{eq-Yamabe} to 
\begin{equation}\label{eq-yamabe-v}
v_{tt}+\Delta_{\theta}v-\frac14(n-2)^2v+\frac14n(n-2)v^{\frac{n+2}{n-2}}=0.\end{equation}
In this paper, we always consider positive solutions $u$ of \eqref{eq-Yamabe} in $B_1\setminus \{0\}$, 
with a nonremovable singularity at the origin. With $v$ given by \eqref{eq-def-v}, 
we shall say that $v$ has a nonremovable singularity at infinity.

If $u$ is a radial positive solution of \eqref{eq-Yamabe} in $\mathbb R^n\setminus\{0\}$, we set 
\begin{equation}\label{eq-def-psi}\psi(t)=r^{\frac{n-2}2}u(r\theta).\end{equation}
Then, $\psi$ is a positive solution of 
\begin{equation}\label{eq-psi}
\psi''-\frac14(n-2)^2\psi+\frac14n(n-2)\psi^{\frac{n+2}{n-2}}=0\quad\text{on }\mathbb R.\end{equation}
According to \cite{CaffarelliGS1989}, any positive solution of \eqref{eq-psi} with a nonremovable singularity
at infinity is 
a smooth positive periodic function, 
either a positive constant, which is unique, or a smooth nonconstant periodic function. 

For a fixed positive periodic solution 
$\psi$ of \eqref{eq-psi}, the linearized operator of \eqref{eq-yamabe-v} at $\psi$ 
is given by, for any $w\in C^2(\mathbb R\times\mathbb S^{n-1})$, 
\begin{equation}\label{eq-linearization}
Lw=
w_{tt}+\Delta_{\theta}w-\frac14(n-2)^2w+\frac14n(n+2)\psi^{\frac{4}{n-2}}w.\end{equation}
Our main interest is its kernel. 

Let $\{\lambda_i\}$ be the sequence of eigenvalues of $-\Delta_{\theta}$ on $\mathbb S^{n-1}$, 
arranged in an increasing order
with $\lambda_i\to\infty$ as $i\to\infty$, and let  $\{X_i\}$ be 
a sequence of the corresponding normalized eigenfunctions of 
$-\Delta_\theta$ on $L^2(\mathbb S^{n-1})$, i.e., for each $i\ge 0$, 
\begin{equation}\label{eq-Laplacian-eigenpair}
-\Delta_{\theta}X_i=\lambda_iX_i.\end{equation}
Here, the multiplicity is considered. Hence, 
$$\lambda_0=0, \,\, \lambda_1=\cdots=\lambda_n=n-1, \,\,\lambda_{n+1}=2n, \,\,\cdots.$$
Note that each $X_i$ is a spherical harmonic of certain degree. In the following, 
we fix such a sequence $\{X_i\}$, which forms an orthonormal basis in $L^2(\mathbb S^{n-1})$. 

For each $i\ge 0$ and any $\eta=\eta(t)\in C^2(\mathbb R)$, we write 
\begin{equation}\label{eq-U2-01a}L(\eta X_i)=(L_i\eta)X_i.\end{equation} 
By \eqref{eq-Laplacian-eigenpair}, we have 
\begin{equation}\label{eq-U2-01b}
L_i\eta=\eta''+\Big(\frac14n(n+2)\psi^{\frac{4}{n-2}}-\frac14(n-2)^2-\lambda_i\Big)\eta.\end{equation}

We have the following results concerning the kernel of $L_i$ for each $i=0, 1, \cdots$.

\begin{lemma}\label{lemma-Asymptotics-U1a}
Let $\psi$ be the positive constant solution of \eqref{eq-psi}. 

$\mathrm{(i)}$  For $i=0$, $\mathrm{Ker}(L_0)$ has a basis $\cos(\sqrt{n-2}t)$ and $\sin(\sqrt{n-2}t)$. 

$\mathrm{(ii)}$ There exists an increasing sequence of positive constants $\{\rho_i\}_{i\ge 1}$, 
divergent to $\infty$, such that  
for any $i\ge 1$, 
$\mathrm{Ker}(L_i)$ has a basis  
$e^{-\rho_i t}$ and $e^{\rho_i t}$. Moreover, $\rho_1=\cdots=\rho_n=1$. 
\end{lemma}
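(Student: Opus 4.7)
The plan is essentially a direct calculation, since the positive constant equilibrium reduces the linearized operator to one with constant coefficients. First, I will determine the constant $\psi$ from \eqref{eq-psi} by setting $\psi''=0$, which forces $\psi^{4/(n-2)} = (n-2)/n$. Substituting into \eqref{eq-U2-01b} and simplifying the potential
$$\tfrac{1}{4}n(n+2)\,\psi^{4/(n-2)} - \tfrac{1}{4}(n-2)^2 = \tfrac{1}{4}(n-2)\bigl[(n+2)-(n-2)\bigr] = n-2$$
reduces the operator to the clean form
$$L_i\eta = \eta'' + \bigl(n-2-\lambda_i\bigr)\eta.$$

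With this reduction, both parts follow from elementary ODE theory applied to a second-order constant-coefficient equation. For part (i), the value $\lambda_0 = 0$ gives $L_0\eta = \eta'' + (n-2)\eta$, whose kernel is spanned by $\cos(\sqrt{n-2}\,t)$ and $\sin(\sqrt{n-2}\,t)$. For part (ii), I use the listing of the eigenvalues of $-\Delta_\theta$: since $\lambda_i \geq n-1 > n-2$ for all $i \geq 1$, I can set $\rho_i := \sqrt{\lambda_i-(n-2)} > 0$, which makes $L_i = \partial_t^2 - \rho_i^2$, so its kernel is spanned by $e^{\pm\rho_i t}$. The explicit identity $\rho_1 = \cdots = \rho_n = 1$ comes directly from $\lambda_1 = \cdots = \lambda_n = n-1$, and the monotone divergence $\rho_i \to \infty$ is inherited from the corresponding property of $\{\lambda_i\}_{i\geq 1}$.

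There is no substantive obstacle in this argument; the lemma is really a bookkeeping result. The one point worth being careful about is the algebraic cancellation displayed above, which is what produces the clean indicial expression $n-2-\lambda_i$ and is precisely what causes the dichotomy between the oscillatory case $i=0$ and the exponentially growing/decaying case $i\geq 1$. This dichotomy is the conceptual content that will be exploited later when setting up weighted Hölder spaces for the linearized problem.
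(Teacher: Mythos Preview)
Your argument is correct: the constant equilibrium $\psi^{4/(n-2)}=(n-2)/n$ reduces $L_i$ to $\partial_t^2+(n-2-\lambda_i)$, and the dichotomy between $i=0$ and $i\ge 1$ follows exactly as you describe. The paper does not actually supply a proof of this lemma; it only cites \cite{KorevaarMPS1999}, \cite{MazzeoP1999}, \cite{MazzeoDU1996}, \cite{HLL201?}, and \cite{HanLi2010} for details, so your explicit computation is precisely the content that those references would provide in the constant-$\psi$ case.
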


\begin{lemma}\label{lemma-Asymptotics-U2a}
Let $\psi$ be a positive nonconstant periodic solution of \eqref{eq-psi}. 

$\mathrm{(i)}$  For $i=0$, $\mathrm{Ker}(L_0)$ has a basis  
$p_0^+$ and $atp_0^++p_0^-$, for some smooth periodic functions $p_0^+$ and $p_0^-$ on $\mathbb R$,  
and some constant $a$. 

$\mathrm{(ii)}$ There exists an increasing sequence of positive constants $\{\rho_i\}_{i\ge 1}$, 
divergent to $\infty$, such that  
for any $i\ge 1$, 
$\mathrm{Ker}(L_i)$ has a basis 
$e^{-\rho_i t}p_i^+$ and $e^{\rho_i t}p_i^-$, for some smooth periodic functions 
$p_i^+$ and $p_i^-$  on $\mathbb R$. Moreover, $\rho_1=\cdots=\rho_n=1$. 

In addition, all periodic functions in $\mathrm{(i)}$ and $\mathrm{(ii)}$ have the same period as $\psi$.
\end{lemma}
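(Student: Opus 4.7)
The plan is to analyze $L_i\eta=0$ for each $i$ as a Hill-type ODE whose coefficient is periodic with period $T$ equal to that of $\psi$, and to apply Floquet theory. Since the expression \eqref{eq-U2-01b} has no first-order term, the Wronskian of any fundamental system is constant in $t$, so the monodromy matrix $M_i$ lies in $SL(2,\mathbb R)$ and its two eigenvalues multiply to one. The structure of $\mathrm{Ker}(L_i)$ is then dictated by whether $M_i$ is hyperbolic (real distinct eigenvalues), elliptic (unit-modulus complex conjugates), or parabolic (double eigenvalue $\pm 1$).

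For part (i), I would differentiate \eqref{eq-psi} to show $L_0\psi'=0$. Since $\psi$ is nonconstant periodic, $\psi'$ is a nontrivial periodic kernel element, which I take as $p_0^+$. Hence $M_0$ has $1$ as an eigenvalue, and combined with $\det M_0=1$, both eigenvalues equal $1$. Depending on whether $M_0$ is the identity (so two independent periodic solutions exist, giving $a=0$) or a genuine Jordan block, the second solution has the stated form. In the Jordan case, the non-periodic Floquet solution $\eta_2$ satisfies $\eta_2(t+T)=\eta_2(t)+c\,p_0^+(t)$ for some $c\neq 0$; setting $a=c/T$ makes $p_0^-:=\eta_2-at\,p_0^+$ genuinely $T$-periodic by a direct check using periodicity of $p_0^+$.

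For part (ii) with $1\le i\le n$ (so $\lambda_i=n-1$), I would exploit the translation invariance of \eqref{eq-Yamabe}: each partial derivative $\partial_{x_j}u$ of the radial solution $u(x)=r^{-(n-2)/2}\psi(-\ln r)$ solves the linearization of \eqref{eq-Yamabe}. A direct computation, combined with the substitution $v=r^{(n-2)/2}u$, converts $\partial_{x_j}u$ into a kernel element $-\theta_j e^t\phi(t)$ of $L$, where $\phi(t):=\tfrac{n-2}{2}\psi(t)+\psi'(t)$ is periodic and nontrivial (were $\phi\equiv 0$, then $\psi\propto e^{-(n-2)t/2}$, contradicting periodicity). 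Since $\theta_j$ is a degree-one spherical harmonic with eigenvalue $n-1$, this yields $e^t\phi(t)\in\mathrm{Ker}(L_j)$, so $e^T$ is an eigenvalue of $M_j$; by $\det M_j=1$, the other is $e^{-T}$, and we conclude $\rho_1=\cdots=\rho_n=1$ with the stated basis.

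For part (ii) with $i\ge n+1$, no explicit symmetry is available, and one must verify $M_i$ is hyperbolic. This is the step I expect to be the main obstacle. For large $\lambda_i$ this is routine: the coefficient of $\eta$ in $L_i\eta$ becomes uniformly strongly negative, and a WKB-type asymptotic yields $|\mathrm{tr}(M_i)|\sim 2\cosh(\sqrt{\lambda_i}\,T)\to\infty$ with $\rho_i\sim\sqrt{\lambda_i}\to\infty$. For each individual $i\ge n+1$, one must show $\lambda_i$ lies outside the Hill spectrum of $-d^2/dt^2+\bigl(\tfrac14 n(n+2)\psi^{4/(n-2)}-\tfrac14(n-2)^2\bigr)$, which relies on spectral-gap analysis specific to the Delaunay potential, as carried out in \cite{KorevaarMPS1999}; the symmetry-based arguments do not extend beyond degree one, so one cannot avoid using the concrete structure of $\psi$ here. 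Monotonicity of $\{\rho_i\}_{i\ge 1}$ then follows from the continuous, monotone dependence of the Floquet exponent on the spectral parameter within each gap, together with the asymptotic growth; periodicity of the $p_i^\pm$ with period $T$ is automatic from the Floquet normal form.
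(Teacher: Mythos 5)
The paper never proves this lemma: it only cites \cite{KorevaarMPS1999}, \cite{MazzeoP1999}, \cite{MazzeoDU1996}, so there is no internal proof to match, and your Floquet-theory-plus-geometric-Jacobi-fields outline is exactly the standard argument in those references. Parts (i) and the degree-one part of (ii) are correct as you set them up: $L_0\psi'=0$ follows by differentiating \eqref{eq-psi}, the unipotent monodromy gives the basis $p_0^+$, $atp_0^++p_0^-$ (the statement does not require $a\neq 0$, so your case split is enough), and the translation fields $r^{(n-2)/2}\partial_{x_j}u=-\theta_j e^{t}\bigl(\tfrac{n-2}{2}\psi+\psi'\bigr)$ indeed give $\rho_1=\cdots=\rho_n=1$ after checking $\tfrac{n-2}{2}\psi+\psi'\not\equiv 0$, as you do.

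The one substantive criticism is that you misplace the difficulty in (ii) for $i\ge n+1$: no ``spectral-gap analysis specific to the Delaunay potential'' is needed there, and deferring this to \cite{KorevaarMPS1999} as the main obstacle overstates it. Every positive nonconstant periodic solution of \eqref{eq-psi} satisfies $0<\psi<1$ (its orbit lies inside the homoclinic loop through $\psi=1$), and $\lambda_i\ge 2n$ for $i\ge n+1$, so the coefficient in \eqref{eq-U2-01b} obeys
\begin{equation*}
\frac14 n(n+2)\psi^{\frac{4}{n-2}}-\frac14(n-2)^2-\lambda_i<\frac{3n-2}{2}-2n=-\frac{n+2}{2}<0 .
\end{equation*}
Thus $L_i\eta=\eta''+V_i\eta$ with $V_i$ strictly negative is disconjugate: every nontrivial solution has at most one zero, hence no nontrivial bounded solution exists, the multipliers are real, distinct, and positive (a Floquet solution with negative multiplier or on the unit circle would have infinitely many zeros), so the exponential dichotomy holds with factors $p_i^{\pm}$ of period exactly that of $\psi$ --- a point (positivity of the multipliers, period $T$ rather than $2T$) your sketch also leaves implicit. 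Moreover a simple energy argument (multiply $L_iw=0$ by $w$ and integrate over $(a,\infty)$ for the decaying Floquet solution) gives $\rho_i\ge\sqrt{\lambda_i-\sup(\tfrac14 n(n+2)\psi^{4/(n-2)}-\tfrac14(n-2)^2)}\ge\sqrt{\tfrac{n+2}{2}}>1$, which yields $\rho_i>\rho_n=1$ and $\rho_i\to\infty$; monotonicity in $i$ is then the standard monotone dependence of the Floquet exponent on the spectral parameter on the hyperbolic component containing all $\lambda_i$, $i\ge n+1$ (classical Hill theory, cf.\ \cite{CoddingtonL1955}). Note this is precisely the same elementary input ($\lambda_i\ge 2n$ together with $0<\psi<1$) that the paper itself exploits in Lemmas \ref{lemma-uniqueness} and \ref{lemma-estimate-L-infty}, so your appeal to the concrete structure of $\psi$ beyond the bound $0<\psi<1$ is unnecessary.
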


Refer to \cite{KorevaarMPS1999}, \cite{MazzeoP1999},  and \cite{MazzeoDU1996} for details, or 
to \cite{HLL201?} and \cite{HanLi2010} for a more general setting. 
The sequence $\{\rho_i\}$ is commonly referred to as the collection of {\it indicial roots} of the linearized equation.

We denote by $\mathbb Z_+$ the collection of nonnegative integers. 
Define the {\it index set} $\mathcal I$ associated with $\psi$ by  
\begin{align}\label{eq-def-index}
\mathcal I=\Big\{\sum_{i\ge 1} m_i\rho_i;\, m_i\in \mathbb Z_+\text{ with finitely many }m_i>0\Big\}.
\end{align}
In other words, $\mathcal I$ is the collection of linear combinations of finitely many $\rho_1, \rho_2, \cdots$ 
with positive integer coefficients. 
It is possible that some $\rho_i$ can be written 
as a linear combination of some of $\rho_1, \cdots, \rho_{i-1}$ with positive integer coefficients, 
whose sum is at least two. If we write $\mathcal I=\{\mu_i\}$ as an increasing sequence, 
$\{\mu_i\}$ is exactly the sequence in \eqref{eq-estimate-u-k}. 

\smallskip

We next introduce weighted H\"older spaces in $[t_0,\infty)\times\mathbb{S}^{n-1}$. Fix a $t_0>0$.  
For each nonnegative integer $k$, $\alpha\in (0,1)$, and $\mu\in\mathbb{R}$, set
\begin{equation*}
\|w\|_{{C}^{k}_{\mu}([t_0,\infty)\times\mathbb{S}^{n-1})}
=\sum_{j=0}^{k}\sup_{(t,\theta)\in [t_0,\infty)\times\mathbb{S}^{n-1}}e^{\mu t}|\nabla^{j}w(t,\theta)|,
\end{equation*}
and
\begin{align*}
\|w\|_{{C}^{k,\alpha}_{\mu}([t_0,\infty)\times\mathbb{S}^{n-1})}=  
\|w\|_{{C}^{k}_{\mu}([t_0,\infty)\times\mathbb{S}^{n-1})}
+\sup_{t\geq t_0+1}e^{\mu t} [\nabla^kw]_{C^\alpha([t-1,t+1]\times\mathbb{S}^{n-1})},
\end{align*}
where $[\,\cdot\,]_{C^\alpha}$ is the usual H\"older semi-norm.

\begin{definition} 
We define the weighted H\"older space 
${C}^{k,\alpha}_{\mu}([t_0,\infty)\times\mathbb{S}^{n-1})$
 as the collection of functions $w$ in ${C}^{k}([t_0,\infty)\times\mathbb{S}^{n-1})$
 with a finite $\|w\|_{{C}^{k,\alpha}_{\mu}([t_0,\infty)\times\mathbb{S}^{n-1})}$.
\end{definition}

We can define corresponding norms and weighted H\"older spaces for functions depending on $t$ only. 

Let $\psi$ be a positive periodic solution of \eqref{eq-psi}, 
$L$ be the linear operator given as in \eqref{eq-linearization}, 
and $\mu>0$. 
For some $f\in {C}^{0,\alpha}_{\mu}([t_0,\infty)\times\mathbb{S}^{n-1})$, we consider 
the linear equation
\begin{equation}\label{linyamabe}
Lw=f \quad \text{in} \quad (t_0,\infty)\times\mathbb{S}^{n-1}.
\end{equation}
We will  introduce a suitable boundary condition on $t=t_0$ such that
\begin{equation*}
L: {C}^{2,\alpha}_{\mu}([t_0,\infty)\times\mathbb{S}^{n-1})\to {C}^{0,\alpha}_{\mu}([t_0,\infty)\times\mathbb{S}^{n-1})
\end{equation*}
has a bounded inverse.

We first study  the Dirichlet boundary-value problem
\begin{align}\label{linyamabeboundary}\begin{split}
   Lw&=f \quad\text{in } (t_0,\infty)\times\mathbb{S}^{n-1},  \\
   w&=\varphi \quad\text{on }\{t_0\}\times\mathbb{S}^{n-1}.
\end{split}
\end{align}
We point out that the maximum principle cannot be applied directly to the operator $L$
due to the \lq\lq incorrect\rq\rq\, sign of the coefficient of the zero order term. 
Nevertheless, we still have the uniqueness of exponentially decaying solutions. 
Recall that 
$\{X_i\}_{i\ge 0}$ is a fixed sequence of spherical harmonics on $\mathbb S^{n-1}$, 
which forms an orthonormal basis in $L^2(\mathbb S^{n-1})$, and that 
$\{\rho_i\}_{i\ge 1}$ is the sequence given in 
Lemma \ref{lemma-Asymptotics-U1a} and Lemma \ref{lemma-Asymptotics-U2a}.
We define $\rho_0=0$.

\begin{lemma}\label{lemma-uniqueness} Let $\mu>1$, 
$f\in {C}^{0}_{\mu}([t_0,\infty)\times\mathbb{S}^{n-1})$, and 
$\varphi\in C^{0}(\mathbb{S}^{n-1})$. Then, there is at most one solution 
$w\in {C}^{2}_{\mu}([t_0,\infty)\times\mathbb{S}^{n-1})$  of \eqref{linyamabeboundary}. 
\end{lemma}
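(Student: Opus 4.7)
The plan is to use linearity and spherical-harmonic decomposition. If $w_1,w_2\in C^2_\mu$ both solve \eqref{linyamabeboundary}, then $w=w_1-w_2$ satisfies $Lw=0$ on $(t_0,\infty)\times\mathbb{S}^{n-1}$, vanishes on $\{t_0\}\times\mathbb{S}^{n-1}$, and lies in $C^2_\mu$ with $\mu>1$. It suffices to show $w\equiv 0$. Expand $w(t,\theta)=\sum_{i\ge 0}w_i(t)X_i(\theta)$ with $w_i(t)=\int_{\mathbb{S}^{n-1}}w(t,\theta)X_i(\theta)\,d\theta$. Orthonormality and Cauchy--Schwarz give $|w_i(t)|\le \|w(t,\cdot)\|_{L^2(\mathbb{S}^{n-1})}\le Ce^{-\mu t}$, and by \eqref{eq-U2-01a} each $w_i$ satisfies the scalar ODE $L_iw_i=0$ with $w_i(t_0)=0$. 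I would therefore prove $w_i\equiv 0$ mode by mode.

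For $i=0$, Lemmas \ref{lemma-Asymptotics-U1a} and \ref{lemma-Asymptotics-U2a} describe $\mathrm{Ker}(L_0)$ as spanned by oscillating trigonometric functions (constant-$\psi$ case) or by a periodic function together with a linear-times-periodic function (nonconstant case); none of these decay exponentially, so the bound $|w_0(t)|\le Ce^{-\mu t}$ forces $w_0\equiv 0$. For $i\ge 1$, the same lemmas give $w_i(t)=a\,e^{-\rho_it}p_i^+(t)+b\,e^{\rho_it}p_i^-(t)$. Since $p_i^-$ is a nontrivial periodic function, it attains values near $\sup|p_i^-|$ along some sequence $t_k\to\infty$; comparing $|b|e^{\rho_it_k}|p_i^-(t_k)|$ against $Ce^{-\mu t_k}$ forces $b=0$, leaving $w_i(t)=a\,e^{-\rho_it}p_i^+(t)$.

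To conclude $a=0$, I would combine the decay bound with the boundary condition. When $\rho_i<\mu$, the same supremum-along-$t_k$ reasoning applied now to $p_i^+$ gives $|a|e^{-\rho_it_k}\sup|p_i^+|\le Ce^{-\mu t_k}$, hence $a=0$. When $\rho_i\ge \mu$, the boundary condition $w_i(t_0)=a\,p_i^+(t_0)e^{-\rho_it_0}=0$ yields $a=0$ provided $p_i^+(t_0)\neq 0$. The hypothesis $\mu>1$ is precisely what covers the lowest nontrivial modes $i=1,\dots,n$ (where $\rho_i=1$), which would otherwise be out of reach of the decay argument; for them the first, easier case applies.

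The main obstacle I anticipate is the residual high-mode case $\rho_i\ge \mu$ combined with the potential vanishing of $p_i^+(t_0)$. In the constant-$\psi$ regime of Lemma \ref{lemma-Asymptotics-U1a} this is automatic because $p_i^+\equiv 1$, so the argument closes immediately. In the nonconstant regime of Lemma \ref{lemma-Asymptotics-U2a} one must exploit the Wronskian of the basis $\{e^{-\rho_it}p_i^+,\,e^{\rho_it}p_i^-\}$, which is a nonzero constant, to rule out the pathology: if $p_i^+(t_0)=0$ then $p_i^-(t_0)\ne 0$, and using this together with the ODE and the required decay/regularity of $w_i$ up to $t=t_0$ (in particular controlling $w_i'(t_0)$) should contradict the existence of a nontrivial $a\,e^{-\rho_it}p_i^+$. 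This is the delicate ingredient I expect to require the most care.
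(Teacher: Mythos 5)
Your decomposition into spherical harmonics, the elimination of the $i=0$ mode, and the treatment of all modes with $\rho_i<\mu$ by the decay bound are exactly the paper's steps. The genuine gap is the residual case $\rho_i\ge\mu$. After you reduce to $w_i(t)=a\,e^{-\rho_i t}p_i^+(t)$, the Dirichlet condition only yields $a\,p_i^+(t_0)=0$, and nothing in Lemma \ref{lemma-Asymptotics-U2a} prevents $p_i^+(t_0)=0$. Your proposed rescue via the Wronskian cannot close this as stated: the nonvanishing Wronskian constrains the pair of kernel elements, but the boundary-value problem \eqref{linyamabeboundary} prescribes only $w_i(t_0)=0$ and imposes no condition whatsoever on $w_i'(t_0)$, so a function $a\,e^{-\rho_i t}p_i^+(t)$ with $p_i^+(t_0)=0$ would satisfy the ODE, the boundary condition, and the decay bound, leaving nothing to contradict. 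An additional positivity input is indispensable here.

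The paper supplies it with an energy identity: multiply $L_iw_i=0$ by $w_i$ and integrate over $(t_0,\infty)$; the boundary terms vanish because $w_i(t_0)=0$ and $w_i$, $w_i'$ decay exponentially, giving
\begin{equation*}
\int_{t_0}^{\infty}\Big[(\partial_t w_i)^2+\Big(\lambda_i+\frac{(n-2)^2}{4}-\frac{n(n+2)}{4}\psi^{\frac{4}{n-2}}\Big)w_i^2\Big]dt=0,
\end{equation*}
and since $\rho_i\ge\mu>1$ forces $i\ge n+1$, hence $\lambda_i\ge 2n$, while $0<\psi<1$, the zeroth-order coefficient is strictly positive and $w_i\equiv0$. (Equivalently, the same positivity shows by a convexity argument that the decaying kernel element $e^{-\rho_i t}p_i^+$ can never vanish, which would legitimize your boundary-condition step; but that fact must itself be proved and is not contained in the kernel lemmas.) Note also where $\mu>1$ really enters: not only to handle the modes with $\rho_i=1$ by decay, as you say, but to guarantee $\lambda_i\ge 2n$ in the residual case so that the potential in the energy identity has the right sign despite the ``incorrect'' sign of the zero-order term of $L$.
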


\begin{proof} We consider $f=0$ and $\varphi=0$, and assume 
$w\in  {C}^{2}_{\mu}([t_0,\infty)\times\mathbb{S}^{n-1})$ is a solution of
the corresponding \eqref{linyamabeboundary}. 
Set, for each $i\ge 0$, 
\begin{equation*}
w_{i}(t)=\int_{\mathbb S^{n-1}}w(t,\theta)X_{i}(\theta)d\theta.
\end{equation*}
Then, $L_i w_{i}=0$ on $(t_0,\infty)$ and $w_{i}(t_0)=0$.
Hence, ${w}_i$ is a linear combination of the basis of Ker$(L_i)$ 
as in Lemma \ref{lemma-Asymptotics-U1a} and Lemma \ref{lemma-Asymptotics-U2a}. 
In particular, ${w}_0$ has at most a linear growth and, for $i\ge 1$, 
$${w}_i(t)=c_1(t)e^{-\rho_it}+c_2(t)e^{\rho_it},$$
where $c_1$ and $c_2$ are periodic functions. 
By the assumption, we have, for any $t\in(t_0,\infty)$,  
\begin{equation*}
|e^{\mu t}{w}_i(t)|\le C.
\end{equation*}
Hence, ${w}_{i}=0$ for any $i$ with $\rho_i<\mu$. We now take any $i$ with $\rho_i\ge\mu$. 
Then, $c_2=0$ and hence ${w}_i(t)=c_1(t)e^{-\rho_it}$, which decays exponentially 
as $t\to \infty$. 
With $ w_i(t_0)=0$, we have
\begin{equation*}
\int_{t_0}^{\infty}\Big[(\partial_t{w}_{i})^2+\Big(\lambda_i
+\frac{(n-2)^2}{4}-\frac{n(n+2)}{4}{\psi}^{\frac{4}{n-2}}\Big){w}_{i}^2\Big]dt=0.
\end{equation*}
Since $\rho_i\ge\mu>1$, then
$\lambda_i\ge 2n$ for such $i$. With $0<{\psi}<1$, we have ${w}_i=0$. 
In conclusion, ${w}_i=0$ for any $i$, and hence ${w}=0$.
\end{proof} 

We next estimate the $C^{2,\alpha}$-norm of solutions of \eqref{linyamabeboundary}.

\begin{lemma}\label{lemma-estimate-PDE} Let $\alpha\in (0,1)$, $\mu>0$, 
$f\in {C}^{0,\alpha}_{\mu}([t_0,\infty)\times\mathbb{S}^{n-1})$, and 
$\varphi\in C^{2,\alpha}(\mathbb{S}^{n-1})$. Suppose 
$w\in {C}^{2,\alpha}_{\mu}([t_0,\infty)\times\mathbb{S}^{n-1})$ is a solution of \eqref{linyamabeboundary}. Then, 
\begin{align}\label{eq-estimate-a-priori}\begin{split}
\|w\|_{{C}^{2,\alpha}_{\mu}([t_0,\infty)\times\mathbb{S}^{n-1})}
&\leq     C \{\|w\|_{{C}^{0}_{\mu}([t_0,\infty)\times\mathbb{S}^{n-1})}
+\|f\|_{{C}^{0,\alpha}_{\mu}([t_0,\infty)\times\mathbb{S}^{n-1})}   \\
&\qquad\quad+e^{\mu t_0}\|\varphi\|_{{C}^{2,\alpha}(\mathbb{S}^{n-1})}\},
\end{split}\end{align}
where $C$ is a positive constant depending only on $n$, $\alpha$, 
$\mu$, and $\psi$, independent of $t_0$.\end{lemma}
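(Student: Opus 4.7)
The plan is to derive this as a weighted Schauder estimate by localizing to unit-length cylindrical slices, applying standard (unweighted) interior and boundary Schauder estimates to each slice, and then reassembling with the weight $e^{\mu t}$. The crucial feature that makes this work uniformly in $t_0$ is that the zero-order coefficient of $L$, namely $\tfrac{n(n+2)}{4}\psi^{4/(n-2)}-\tfrac{(n-2)^2}{4}$, is bounded in $C^{\alpha}(\mathbb{R})$ with a bound independent of translation in $t$, since $\psi$ is a smooth bounded periodic function.

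For the interior part, I would fix $t\ge t_0+2$ and consider the cylindrical slice $Q_t=[t-1,t+1]\times\mathbb{S}^{n-1}$ with enlargement $Q_t^*=[t-2,t+2]\times\mathbb{S}^{n-1}$. Applying the standard interior Schauder estimate on $Q_t$ (with source $Q_t^*$) yields
\begin{equation*}
\|w\|_{C^{2,\alpha}(Q_t)}\le C\bigl(\|w\|_{C^{0}(Q_t^*)}+\|f\|_{C^{0,\alpha}(Q_t^*)}\bigr),
\end{equation*}
where $C$ depends on $n$, $\alpha$, and $\psi$ but not on $t$ (since the coefficients of $L$ are uniformly $C^\alpha$ by periodicity of $\psi$; one can also cover $\mathbb{S}^{n-1}$ by finitely many coordinate charts and apply the standard flat-space estimate). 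Multiplying by $e^{\mu t}$ and using that $e^{\mu t}\le e^{2\mu}e^{\mu s}$ for $s\in[t-2,t+2]$, I can absorb this factor into $C$ and obtain
\begin{equation*}
e^{\mu t}\|w\|_{C^{2,\alpha}(Q_t)}\le C\bigl(\|w\|_{C^{0}_\mu([t_0,\infty)\times\mathbb{S}^{n-1})}+\|f\|_{C^{0,\alpha}_\mu([t_0,\infty)\times\mathbb{S}^{n-1})}\bigr).
\end{equation*}
Taking the supremum over $t\ge t_0+2$ controls the weighted sup-norms of $\nabla^j w$ for $j\le 2$ and the weighted $C^\alpha$ seminorm of $\nabla^2 w$ on $[t_0+1,\infty)\times\mathbb{S}^{n-1}$.

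For the boundary part, I would treat the slab $S=[t_0,t_0+3]\times\mathbb{S}^{n-1}$. Applying the boundary Schauder estimate at $\{t=t_0\}$ to $w$, regarded as solving $Lw=f$ with $w|_{t=t_0}=\varphi$, gives
\begin{equation*}
\|w\|_{C^{2,\alpha}(S)}\le C\bigl(\|w\|_{C^{0}(S^*)}+\|f\|_{C^{0,\alpha}(S^*)}+\|\varphi\|_{C^{2,\alpha}(\mathbb{S}^{n-1})}\bigr),
\end{equation*}
with $S^*=[t_0,t_0+4]\times\mathbb{S}^{n-1}$; multiplication by $e^{\mu t_0}$ and the comparability $e^{\mu t_0}\le e^{\mu t}\le e^{4\mu}e^{\mu t_0}$ on $S^*$ converts the right-hand side to the three weighted norms appearing in \eqref{eq-estimate-a-priori}. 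Combining this with the interior estimate, the two families of slices overlap on $[t_0+1,t_0+3]\times\mathbb{S}^{n-1}$, so no gap remains and we obtain the full weighted norm of $w$ on $[t_0,\infty)\times\mathbb{S}^{n-1}$.

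The main technical point — and the only place one has to be a bit careful — is to verify that the weighted $C^\alpha$ seminorm is correctly controlled by the unweighted pieces. For $t,t'$ with $|t-t'|\le 1$ one has $e^{\mu t}\sim e^{\mu t'}$, so the weighted H\"older difference quotient across the slice is comparable to $e^{\mu t}$ times the unweighted difference quotient, and the localized Schauder bound handles it directly. For the uniform dependence of the constant $C$ on $t_0$, the key observation is that the cylindrical symmetry plus $t$-independence of $\Delta_\theta$ together with periodicity of $\psi$ makes the operator $L$ translation-quasi-invariant, so the constants from Schauder are genuinely uniform in $t_0$, as claimed.
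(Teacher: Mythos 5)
Your proposal is correct and follows essentially the same route as the paper: interior Schauder estimates on unit-length slices for $t\ge t_0+2$, a boundary Schauder estimate on the slab $[t_0,t_0+4]\times\mathbb{S}^{n-1}$ incorporating $\varphi$, multiplication by $e^{\mu t}$ using comparability of the weight on bounded intervals, and uniformity in $t_0$ from the periodicity of $\psi$. The only detail you gloss over is that the weighted norm of $f$ only records H\"older seminorms on slices $[t'-1,t'+1]$, so bounding $[f]_{C^\alpha}$ on the length-four slice $[t-2,t+2]$ requires the small case split (pairs of points with $|t_1-t_2|\le 2$ versus $>2$, the latter controlled by the sup norm) that the paper carries out explicitly.
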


\begin{proof} The proof is based on a combination of interior Schauder estimates and boundary 
Schauder estimates. Fix a $t\ge t_0$. We consider two cases. 

First, consider $t>t_0+2$. By the interior Schauder estimate, we have
\begin{align*}
&\,   \sum_{j=0}^{2}\sup_{\mathbb{S}^{n-1}}|\nabla^{j}w(t,\cdot)|+[\nabla^2w]_{C^\alpha([t-1,t+1]\times\mathbb{S}^{n-1})}\\
\leq    &\,    C\{\|w\|_{{L}^{\infty}([t-2,t+2]\times\mathbb{S}^{n-1})}+\|f\|_{{L}^{\infty}([t-2,t+2]\times\mathbb{S}^{n-1})}    
+[f]_{C^\alpha([t-2,t+2]\times\mathbb{S}^{n-1})}\},
\end{align*}
where $C$ is a positive constant independent of $t$.
To estimate the H\"older semi-norm of $f$ on $[t-2,t+2]\times\mathbb{S}^{n-1}$ in the right-hand side, we take 
$(t_1,\theta_1),(t_2,\theta_2)\in[t-2,t+2]\times\mathbb{S}^{n-1}$ with $(t_1,\theta_1)\neq(t_2,\theta_2)$. 
We consider two cases: $|t_1-t_2|\le 2$ and $|t_1-t_2|>2$. In the first case, there exists 
a $t'\in[t-1,t+1]$ such that $t_1, t_2\in [t'-1,t'+1]$. 
Hence, 
$$[f]_{C^\alpha([t-2,t+2]\times\mathbb{S}^{n-1})}
\le \max\{\sup_{t'\in[t-1,t+1]}[f]_{C^\alpha([t'-1,t'+1]\times\mathbb{S}^{n-1})}, 
\|f\|_{{L}^{\infty}([t-2,t+2]\times\mathbb{S}^{n-1})}
\}.$$
Then, 
\begin{align*}
&\,   \sum_{j=0}^{2}\sup_{\mathbb{S}^{n-1}}|\nabla^{j}w(t,\cdot)|+[\nabla^2w]_{C^\alpha([t-1,t+1]\times\mathbb{S}^{n-1})}\\
\leq    &\,    C\{\|w\|_{{L}^{\infty}([t-2,t+2]\times\mathbb{S}^{n-1})}+\|f\|_{{L}^{\infty}([t-2,t+2]\times\mathbb{S}^{n-1})}    
+\sup_{t'\in[t-1,t+1]}[f]_{C^\alpha([t'-1,t'+1]\times\mathbb{S}^{n-1})}\}.
\end{align*}
Multiplying both sides by $e^{\mu t}$ and taking supremum over $t\in (t_0+2, \infty)$, we have
\begin{equation}\label{estimate1}
\begin{split}
&\,   \sum_{j=0}^{2}\sup_{t\in (t_0+2, \infty)}\sup_{\mathbb{S}^{n-1}}e^{\mu t}|\nabla^{j}w(t,\cdot)|
+\sup_{t\in (t_0+2, \infty)}e^{\mu t}[\nabla^2w]_{C^\alpha([t-1,t+1]\times\mathbb{S}^{n-1})}\\
\leq    &\,  C\{ \|w\|_{{C}^{0}_{\mu}([t_0,\infty)\times\mathbb{S}^{n-1})}+ \|f\|_{{C}^{0,\alpha}_{\mu}([t_0,\infty)\times\mathbb{S}^{n-1})}\},
\end{split}
\end{equation}
where $C$ is a positive constant independent of $t_0$.

Next, consider $t_0\leq t\leq t_0+2$. By the boundary Schauder estimate, we have
\begin{align*}
&\,   \sum_{j=0}^{2}\sup_{\mathbb{S}^{n-1}}|\nabla^{j}w(t,\cdot)|+[\nabla^2w]_{C^\alpha([t_0,t_0+3]\times\mathbb{S}^{n-1})}\\
\leq    &\,    C\{ \|w\|_{{L}^{\infty}([t_0,t_0+4]\times\mathbb{S}^{n-1})}
+\|\varphi\|_{C^{2,\alpha}(\mathbb{S}^{n-1})}\\
&\qquad +\|f\|_{{L}^{\infty}([t_0,t_0+4]\times\mathbb{S}^{n-1})}  
+[f]_{C^\alpha([t_0,t_0+4]\times\mathbb{S}^{n-1})}\}.
\end{align*}
By  arguing similarly as above, we have 
\begin{equation}\label{estimate2}
\begin{split}
&\,   \sum_{j=0}^{2}\sup_{t\in [t_0,t_0+2]}\sup_{\mathbb{S}^{n-1}}e^{\mu t}|\nabla^{j}w(t,\cdot)|
+\sup_{t\in[t_0+1,t_0+2]}e^{\mu t}[\nabla^2w]_{C^\alpha([t-1,t+1]\times\mathbb{S}^{n-1})}\\
\leq &\,   C \{\|w\|_{{C}^{0}_{\mu}([t_0,\infty)\times\mathbb{S}^{n-1})}+\|f\|_{{C}^{0,\alpha}_{\mu}([t_0,\infty)\times\mathbb{S}^{n-1})} 
+e^{\mu t_0}\|\varphi\|_{{C}^{2,\alpha}(\mathbb{S}^{n-1})}\}.
\end{split}
\end{equation}

Combining \eqref{estimate1} and \eqref{estimate2}, we have the desired result.
\end{proof}

Next, we estimate the $L^\infty$-norm of solutions of \eqref{linyamabeboundary} on 
cylinders with finite length, 
with homogeneous boundary conditions. The proof below, based on a rescaling argument,  
and some other arguments in this paper  
are adapted from \cite{MazzeoP1999}.

\begin{lemma}\label{lemma-estimate-L-infty} 
Let $\mu>1$ and $\mu\neq \rho_i$ for any $i\ge 1$,  
$t_0$ and $T$ be constants with $t_0\ge 0$ 
and $T-t_0\ge 4$, and
$f\in {C}^{0}([t_0,T]\times\mathbb{S}^{n-1})$. Suppose 
$w\in {C}^{2}([t_0,T]\times\mathbb{S}^{n-1})$ satisfies 
$\int_{\mathbb S^{n-1}}{w}(t,\theta)X_{i}(\theta)d\theta=0$ for any $\rho_i<\mu$ and any $t\in [t_0,T]$, and 
\begin{align*}
   Lw&=f \quad\text{in } (t_0,T)\times\mathbb{S}^{n-1},  \\
   w&=0 \quad\text{on }(\{t_0\}\cup\{T\})\times\mathbb{S}^{n-1}.
\end{align*}
Then, 
\begin{equation}\label{eq-estimate-finite-interval-a}
\sup_{(t,\theta)\in[t_0, T]\times\mathbb{S}^{n-1}}e^{\mu t}|{w}(t,\theta)|
\leq C\sup_{(t,\theta)\in[t_0, T]\times\mathbb{S}^{n-1}}e^{\mu t}|f(t,\theta)|,
\end{equation}
where $C$ is a positive constant depending only on $n$, $\alpha$, 
$\mu$, and $\psi$, independent of $t_0$ and $T$. \end{lemma}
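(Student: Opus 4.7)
The plan is to argue by contradiction and rescaling, in the spirit of the blow-up arguments adapted from \cite{MazzeoP1999}. Suppose the estimate fails; then there exist sequences $t_0^{(k)}\ge 0$, $T^{(k)}\ge t_0^{(k)}+4$, and pairs $(w^{(k)},f^{(k)})$ satisfying the hypotheses of the lemma, normalized so that
\[
\sup_{[t_0^{(k)},T^{(k)}]\times\mathbb S^{n-1}}e^{\mu t}|w^{(k)}|=1,\qquad \sup_{[t_0^{(k)},T^{(k)}]\times\mathbb S^{n-1}}e^{\mu t}|f^{(k)}|=:\varepsilon_k\to 0.
\]
Let $(t_k,\theta_k)$ realize the supremum for $w^{(k)}$, and define
\[
\tilde w^{(k)}(s,\theta):=e^{\mu t_k}w^{(k)}(t_k+s,\theta),\qquad \tilde f^{(k)}(s,\theta):=e^{\mu t_k}f^{(k)}(t_k+s,\theta)
\]
on the shifted interval $I_k:=[t_0^{(k)}-t_k,T^{(k)}-t_k]$. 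By construction $|\tilde w^{(k)}(s,\theta)|\le e^{-\mu s}$, $|\tilde w^{(k)}(0,\theta_k)|=1$, $|\tilde f^{(k)}|\le \varepsilon_k e^{-\mu s}$, and $\tilde w^{(k)}$ vanishes on $\partial I_k\times\mathbb S^{n-1}$.

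Since $\psi$ is periodic of some period $P$, after passing to a subsequence I may assume $t_k \bmod P$ converges, so that the translated operator $L^{(k)}$ with zero-order coefficient $\tfrac{n(n+2)}{4}\psi(t_k+\cdot)^{4/(n-2)}$ converges locally uniformly to a limit operator $L_\infty$ of the same form, built from a translate $\psi_\infty$ of $\psi$ and hence possessing the same indicial roots $\{\rho_i\}$ and kernel structure as in Lemmas \ref{lemma-Asymptotics-U1a}--\ref{lemma-Asymptotics-U2a}. Standard elliptic regularity yields uniform local bounds on $\tilde w^{(k)}$, and Arzel\`a--Ascoli extracts a $C^1_{\mathrm{loc}}$ subsequential limit $w_\infty$ on $I_\infty\times\mathbb S^{n-1}$, where $I_\infty\in\{\mathbb R,\,[a,\infty),\,(-\infty,b],\,[a,b]\}$ according to whether $t_k-t_0^{(k)}$ and $T^{(k)}-t_k$ remain bounded. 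Interior elliptic regularity then upgrades $w_\infty$ to a smooth solution of $L_\infty w_\infty=0$ that vanishes at any finite endpoint of $I_\infty$, satisfies $|w_\infty(s,\theta)|\le e^{-\mu s}$, inherits $\int_{\mathbb S^{n-1}}w_\infty(s,\theta)X_i(\theta)\,d\theta=0$ for every $i$ with $\rho_i<\mu$, and is nontrivial since $|w_\infty(0,\theta_\infty)|=1$ with $\theta_\infty=\lim\theta_k$.

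The task is now to prove $w_\infty\equiv 0$, contradicting its nontriviality. I would decompose $w_\infty(s,\theta)=\sum_{i\ge 0}a_i(s)X_i(\theta)$; each $a_i$ solves $L_{i,\infty}a_i=0$ and is therefore, by Lemmas \ref{lemma-Asymptotics-U1a}--\ref{lemma-Asymptotics-U2a}, a linear combination of the basis elements $e^{\pm\rho_i s}p_i^\pm(s)$ for $i\ge 1$ (with smooth periodic $p_i^\pm$), and of the corresponding basis for $i=0$. Orthogonality forces $a_i\equiv 0$ for every $i$ with $\rho_i<\mu$. For $i$ with $\rho_i>\mu$ (the hypothesis $\mu\neq\rho_i$ ruling out resonance), write $a_i(s)=c_1 e^{-\rho_i s}p_i^+(s)+c_2 e^{\rho_i s}p_i^-(s)$: on any $I_\infty$ reaching $+\infty$, the bound $|a_i|\le Ce^{-\mu s}$ with $\rho_i>\mu>0$ forces $c_2=0$; symmetrically $c_1=0$ if $I_\infty$ reaches $-\infty$; and a retained zero Dirichlet endpoint then supplies the remaining linear relation that kills the other coefficient.

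The main obstacle is the bounded case $I_\infty=[a,b]$ with $b-a\ge 4$, where no exponential-decay argument is available. I would split the surviving modes ($\rho_i>\mu$) into two groups. For $\lambda_i$ strictly larger than $\sup_s\bigl(\tfrac{n(n+2)}{4}\psi_\infty(s)^{4/(n-2)}-\tfrac{(n-2)^2}{4}\bigr)$, multiplying $L_{i,\infty}a_i=0$ by $a_i$ and integrating by parts over $[a,b]$ with the Dirichlet conditions yields
\[
\int_a^b (a_i')^2\,ds+\int_a^b\!\Bigl(\lambda_i+\tfrac{(n-2)^2}{4}-\tfrac{n(n+2)}{4}\psi_\infty^{\frac{4}{n-2}}\Bigr)a_i^2\,ds=0,
\]
with both integrands nonnegative, so $a_i\equiv 0$. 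For the finitely many remaining low modes, I would analyze the two-point Dirichlet problem for $L_{i,\infty}$ on $[a,b]$ directly, using the nonzero Wronskian of the exponential basis $e^{\pm\rho_i s}p_i^\pm$ and the hyperbolic growth/decay of its two components to show that the $2\times 2$ Dirichlet determinant is nonvanishing; this is the principal technical step, and it uses the lower bound $b-a\ge 4$ in an essential way. Granting this, $w_\infty\equiv 0$ in every case, contradicting $|w_\infty(0,\theta_\infty)|=1$, and the estimate \eqref{eq-estimate-finite-interval-a} follows with a constant $C$ depending only on $n,\mu,\psi$ since nothing in the contradiction scheme sees $t_0$ or $T$.
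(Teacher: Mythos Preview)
Your overall strategy---contradiction, rescaling at the point where $e^{\mu t}|w|$ is maximized, passing to a limit solution $w_\infty$ of $L_\infty w_\infty=0$ on an interval $I_\infty$, and decomposing into spherical harmonic modes---is exactly the paper's. The divergence is in the endgame, and there you both introduce an unnecessary complication and leave a genuine gap.

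The key observation you are missing is that \emph{there are no ``low modes'' to worry about}. The orthogonality hypothesis kills every $a_i$ with $\rho_i<\mu$, and since $\mu>1=\rho_1=\cdots=\rho_n$, every surviving index satisfies $i\ge n+1$, hence $\lambda_i\ge\lambda_{n+1}=2n$. Because $0<\psi<1$, one has for all such $i$
\[
\lambda_i+\frac{(n-2)^2}{4}-\frac{n(n+2)}{4}\psi_\infty^{\frac{4}{n-2}}
> 2n+\frac{(n-2)^2}{4}-\frac{n(n+2)}{4}=\frac{n}{2}+1>0,
\]
so your energy identity is coercive for \emph{every} surviving mode. The paper exploits this uniformly: at each end of $I_\infty$, whether finite (Dirichlet) or infinite (exponential decay forced by $|a_i|\le Ce^{-\mu s}$ and $\rho_i>\mu$), the boundary contribution in the integration by parts vanishes, and the energy identity then gives $a_i\equiv 0$ in one stroke for all cases. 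No Dirichlet-determinant analysis is needed, and the bound $b-a\ge 4$ plays no role at this stage.

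Your alternative route also has a gap in the semi-infinite case. After the decay bound kills, say, $c_2$ on $[a,\infty)$, you are left with $a_i(s)=c_1e^{-\rho_i s}p_i^+(s)$, and the Dirichlet condition at $s=a$ reads $c_1\,p_i^+(a)=0$; this does not force $c_1=0$ when the periodic factor $p_i^+$ happens to vanish at $a$. The energy argument above bypasses this entirely.

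A minor point: you should also argue (as the paper does, via a boundary gradient estimate) that the maximizing point stays a definite distance from both endpoints, so that $0$ lies in the interior of $I_\infty$ and the nontriviality $|w_\infty(0,\theta_\infty)|=1$ is legitimate.
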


\begin{proof} We prove by a contradiction argument. Suppose there exist sequences $\{t_i\}$,  $\{T_i\}$, 
$\{w_i\}$, and $\{f_i\}$, with $t_i\ge 0$ and $T_i-t_i\ge 4$,  such that 
\begin{align*}
   Lw_i&=f_i \quad\text{in } (t_i,T_i)\times\mathbb{S}^{n-1},  \\
   w_i&=0 \quad\text{on }(\{t_i\}\cup\{T_i\})\times\mathbb{S}^{n-1},
\end{align*}
and 
\begin{align*}
\sup_{(t,\theta)\in[t_i, T_i]\times\mathbb{S}^{n-1}}e^{\mu t}|f_i(t,\theta)|&=1,\\
\sup_{(t,\theta)\in[t_i, T_i]\times\mathbb{S}^{n-1}}e^{\mu t}|{w}_i(t,\theta)|&\to\infty\quad\text{as }i\to\infty. 
\end{align*}
Choose $t_i^*\in(t_i,T_i)$ such that
\begin{equation*}
A_i\equiv\sup_{\mathbb{S}^{n-1}}e^{\mu t^*_i}|w_i(t^*_i,\cdot)|=\sup_{(t,\theta)\in[t_0, T]\times\mathbb{S}^{n-1}}e^{\mu t}|{w}_i(t,\theta)|. 
\end{equation*}
Then, $A_i\to\infty$ as $i\to\infty$. Define
\begin{align}\label{eq-definition-rescaling-w}
\widetilde{w}_i(t,\theta)=A_i^{-1}e^{\mu t_i^*}w_i(t+t_i^*,\theta), \end{align}
and 
\begin{align}\label{eq-definition-rescaling-f}
\widetilde{f}_i(t,\theta)&=A_i^{-1}e^{\mu t_i^*}f_i(t+t_i^*,\theta).
\end{align}
Then, 
\begin{equation*}
\sup_{\mathbb S^{n-1}}|\widetilde{w}_i(0,\cdot)|=1,
\end{equation*}
and, for any $(t,\theta)\in [t_i-t_i^*,T_i-t_i^*]\times\mathbb{S}^{n-1}$, 
\begin{equation}\label{eq-estimate-w-tilde}
|e^{\mu t}\widetilde{w}_i(t,\theta)|\le 1.
\end{equation}
Moreover, 
\begin{equation*}
L_{\psi(\cdot+t_i^*)}\widetilde{w}_i=\widetilde{f}_i\quad\text{on }(t_i-t_i^*,T_i-t_i^*)\times\mathbb{S}^{n-1}, 
\end{equation*}
where $L_{\psi(\cdot+t_i^*)}$ is the linearized Yamabe operator at $\psi(\cdot+t_i^*)$. 
Passing to subsequences, we assume, for some $\tau_-\in\mathbb{R}^{-}\cup\{-\infty\}$
and $\tau_+\in\mathbb{R}^{+}\cup\{\infty\}$,  
\begin{equation}\label{eq-limit-tau}t_i-t_i^*\to \tau_-, \quad T_i-t_i^*\to \tau_+.\end{equation}
Hence, $\tau_-<0$ if it is finite, and similarly $\tau_+>0$ if it is finite. In fact, it follows from \eqref{eq-estimate-w-tilde} that 
$$|\widetilde{w}_{i}|
\le Ce^{\mu (t_i^*-t_i)} \quad\text{on }(t_i-t_i^*,t_i-t_i^*+2)\times\mathbb{S}^{n-1},$$ 
and hence
$$\Big|\frac{d^2\widetilde{w}_{i}}{dt^2}+\Delta_{\theta}\widetilde{w}_{i}\Big|
\le Ce^{\mu (t_i^*-t_i)} \quad\text{on }(t_i-t_i^*,t_i-t_i^*+2)\times\mathbb{S}^{n-1}.$$ 
Since $\widetilde{w}_{i}=0$ on $\{t_i-t_i^*\}\times\mathbb{S}^{n-1}$,
we  have 
$$|\nabla\widetilde{w}_{i}|
\le Ce^{\mu (t_i^*-t_i)} \quad\text{on }(t_i-t_i^*,t_i-t_i^*+1)\times\mathbb{S}^{n-1}.$$ 
This proves that $t_i-t_i^*$ remains bounded away from zero. Similarly, 
$T_i-t_i^*$ remains bounded away from zero. As a consequence, $0\in (\tau_-, \tau_+)$. 
Furthermore, we  assume
\begin{equation}\label{eq-limit-w}\widetilde{w}_i\to \widehat{w},\quad 
\psi(\cdot+t_i^*)\to\widehat{\psi}\quad\text{in every compact set of }(\tau_-, \tau_+).\end{equation}
We also have $\widetilde{f}_i\to 0$  in every compact set of $(\tau_-, \tau_+)$. Therefore,  $\widehat{w}\neq 0$, 
\begin{equation}\label{eq-estimate-w}
|e^{\mu t}\widehat{w}(t,\theta)|\le 1\quad\text{for any }(t,\theta)\in(\tau_-, \tau_+)\times\mathbb{S}^{n-1},
\end{equation}
and 
\begin{equation*}
L_{\widehat\psi}\widehat{w}=0\quad\text{on }(\tau_-, \tau_+)\times\mathbb{S}^{n-1},
\end{equation*}
where $L_{\widehat\psi}$ is the linearized Yamabe operator at $\widehat\psi$. 
We note that $\widehat\psi$ is a positive periodic solution of \eqref{eq-psi}. In fact, 
$\widehat\psi=\psi(\cdot+\widehat \tau)$ for some $\widehat \tau$. 
Moreover, 
\begin{equation}\label{eq-limit-w-value}\lim_{t\to \tau_*}\widehat w(t)=0,\end{equation}
where $\tau_*=\tau_-$ or $\tau_+$ if it is finite. 

Next, we proceed as in the proof of Lemma \ref{lemma-uniqueness}. For any $i\ge 0$, set 
\begin{equation}\label{eq-definition-w-i}
\widehat{w}_{i}(t)=\int_{\mathbb S^{n-1}}\widehat{w}(t,\theta)X_{i}(\theta)d\theta.
\end{equation}
Then, $L_i\widehat{w}_{i}=0$ and hence, $\widehat{w}_i$ is a linear combination of the basis of Ker$(L_i)$ 
as in Lemma \ref{lemma-Asymptotics-U1a} and Lemma \ref{lemma-Asymptotics-U2a}. 
By the assumption, 
$\widehat{w}_i=0$ for any $i$ with $\rho_i<\mu$. 
We now take an $i$ with $\rho_i>\mu$. Then, 
$$\widehat{w}_i(t)=c_1(t)e^{-\rho_it}+c_2(t)e^{\rho_it},$$
where $c_1$ and $c_2$ are periodic functions. 
By \eqref{eq-estimate-w}, we have, for any $t\in(\tau_-,\tau_+)$,  
\begin{equation*}
|e^{\mu t}\widehat{w}_i(t)|\le C.
\end{equation*}
If $\tau_+=\infty$, 
then $c_2=0$ and hence $\widetilde{w}_i(t)=c_1(t)e^{-\rho_it}$, which decays exponentially 
as $t\to \infty$. 
If $\tau_+$ is finite, then $\lim_{t\to\tau_+}\widehat{w}_{i}(t)=0$
by \eqref{eq-limit-w-value}.
Similarly, if $\tau_-=-\infty$, then $c_1=0$ and hence $\widehat{w}_i(t)=c_2(t)e^{\rho_it}$, 
which decays exponentially 
as $t\to -\infty$. 
If $\tau_-$ is finite, then $\lim_{t\to\tau_-}\widehat{w}_{i}(t)=0$
by \eqref{eq-limit-w-value}.
Thus, 
\begin{equation*}
\int_{\tau_-}^{\tau_+}\Big[(\partial_t\widehat{w}_{i})^2+\Big(\lambda_i
+\frac{(n-2)^2}{4}-\frac{n(n+2)}{4}\widehat{\psi}^{\frac{4}{n-2}}\Big)\widehat{w}_{i}^2\Big]dt=0.
\end{equation*}
Since $\rho_i>\mu>1$, then
$\lambda_i\ge 2n$ for such $i$. With $0<\widehat{\psi}<1$, we have $\widehat{w}_i=0$. 
In conclusion, $\widehat{w}_i=0$ for any $i$ and hence $\widehat{w}=0$, which leads to a contradiction.
\end{proof}

Now we start to construct suitable solutions of \eqref{linyamabe}. 
We first construct exponentially decaying solutions in appropriate finite dimensional subspaces in $L^2(\mathbb S^{n-1})$. 

\begin{lemma}\label{lemma-solution-finite-dim} Let $\alpha\in (0,1)$, $\mu>\rho_I$ for some $I\ge 1$, 
and $f\in {C}^{0,\alpha}_{\mu}([t_0,\infty)\times\mathbb{S}^{n-1})$ with $f(t, \cdot)\in 
\mathrm{span}\{X_0, X_1, \cdots, X_I\}$ for any $t\ge t_0$. Then, there exists a  unique solution 
$w\in  {C}^{2,\alpha}_{\mu}([t_0,\infty)\times\mathbb{S}^{n-1})$ of \eqref{linyamabe} with $w(t, \cdot)\in 
\mathrm{span}\{X_0, X_1, \cdots, X_I\}$ for any $t\ge t_0$. Moreover, the correspondence $f\mapsto w$ is linear, 
and 
\begin{equation*}
\|w\|_{{C}^{2,\alpha}_{\mu}([t_0,\infty)\times\mathbb{S}^{n-1})}
\leq C\|f\|_{{C}^{0,\alpha}_{\mu}([t_0,\infty)\times\mathbb{S}^{n-1})},
\end{equation*}
where $C$ is a positive constant depending only on $n$, $\alpha$, 
$\mu$, and $\psi$, independent of $t_0$.
\end{lemma}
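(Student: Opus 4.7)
The plan is to exploit the finite-dimensional hypothesis on $f$ to decouple \eqref{linyamabe} into $I+1$ ODEs indexed by spherical-harmonic modes, and to construct each mode explicitly by variation of parameters with integration limits chosen to force exponential decay at the desired rate.

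\textbf{Decomposition and ODE construction.} Write $f(t,\theta)=\sum_{i=0}^{I}f_i(t)X_i(\theta)$ with $f_i(t)=\int_{\mathbb S^{n-1}}fX_i\,d\theta$, and seek $w(t,\theta)=\sum_{i=0}^{I}w_i(t)X_i(\theta)$; then \eqref{linyamabe} is equivalent to the system $L_iw_i=f_i$ for $i=0,1,\dots,I$. For $1\le i\le I$, let $\phi_i^-,\phi_i^+$ be the basis of $\mathrm{Ker}(L_i)$ from Lemma \ref{lemma-Asymptotics-U1a}/\ref{lemma-Asymptotics-U2a} (so $|\phi_i^\pm(t)|\le C e^{\pm\rho_i t}$) with constant nonzero Wronskian $W_i$, and define
\begin{equation*}
w_i(t):=\frac{1}{W_i}\Bigl[\phi_i^-(t)\!\int_t^\infty\!\phi_i^+(s)f_i(s)\,ds-\phi_i^+(t)\!\int_t^\infty\!\phi_i^-(s)f_i(s)\,ds\Bigr].
\end{equation*}
Since $\mu>\rho_i$ and $|f_i(s)|\le e^{-\mu s}\|f_i\|_{C^0_\mu}$, direct exponential estimates yield $|w_i(t)|\le C(\mu-\rho_i)^{-1}e^{-\mu t}\|f_i\|_{C^0_\mu}$. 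For $i=0$ I would use the analogous formula with the basis $p_0^+$, $atp_0^++p_0^-$ from Lemma \ref{lemma-Asymptotics-U2a} (trigonometric in the constant case).

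\textbf{Assembly, estimate, and uniqueness.} The combination $w=\sum_{i=0}^{I}w_iX_i$ solves \eqref{linyamabe} and visibly lies in the prescribed span. Summing the pointwise $C^0_\mu$ bounds over the finitely many modes gives $\|w\|_{C^0_\mu}\le C\|f\|_{C^0_\mu}$. To upgrade to $C^{2,\alpha}_\mu$ I would apply Lemma \ref{lemma-estimate-PDE} with boundary datum $\varphi:=w(t_0,\cdot)=\sum w_i(t_0)X_i$; from the explicit formula $|w_i(t_0)|\le Ce^{-\mu t_0}\|f_i\|_{C^0_\mu}$, and because the finitely many $X_i$ are smooth, $e^{\mu t_0}\|\varphi\|_{C^{2,\alpha}(\mathbb S^{n-1})}\le C\|f\|_{C^{0,\alpha}_\mu}$. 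For uniqueness, if two such solutions exist their difference $v$ lies in $C^{2,\alpha}_\mu\cap\mathrm{span}\{X_0,\dots,X_I\}$ with $Lv=0$, so each $v_i\in\mathrm{Ker}(L_i)\cap C^0_\mu$; since no element of $\mathrm{Ker}(L_0)$ has any exponential decay and the slowest-decaying element of $\mathrm{Ker}(L_i)$ decays only at rate $\rho_i<\mu$ for $1\le i\le I$, each $v_i\equiv 0$.

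\textbf{Expected main obstacle.} The delicate case is $i=0$ in the nonconstant-$\psi$ setting, where the kernel element $atp_0^++p_0^-$ has linear growth. A naive application of variation of parameters yields only $|w_0(t)|\le C(1+t)e^{-\mu t}$, which fails to lie in $C^0_\mu$. The key observation is that the two terms in the formula conspire so that the $s$-coming from $y_2=as\,p_0^++p_0^-$ combines with $-at\int_t^\infty p_0^+f_0/W\,ds$ to produce an integrand with the factor $(s-t)$, and $\int_t^\infty(s-t)e^{-\mu s}\,ds=O(e^{-\mu t})$ restores the sharp decay. Apart from this cancellation, the remaining work is routine: bounding the explicit integrals, invoking the Schauder estimate Lemma \ref{lemma-estimate-PDE}, and checking that all constants remain independent of $t_0$, which is automatic since the integrals all start at $t$, not $t_0$.
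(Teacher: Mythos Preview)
Your proposal is correct and follows essentially the paper's approach: decompose into modes, solve each ODE by variation of parameters with both integration limits at $\infty$, and assemble. Your $(s-t)$ cancellation for the $i=0$ mode is exactly the paper's double integral
\[
-a\,p_0^+(t)\int_t^\infty\!\!\int_s^\infty \frac{p_0^+(\tau)}{W(\tau)}f_0(\tau)\,d\tau\,ds
\]
rewritten via Fubini, so the constructions coincide. The one genuine difference is in the $C^{2,\alpha}_\mu$ upgrade: the paper estimates $w_i'$, $w_i''$, and $[w_i'']_{C^\alpha}$ directly from the explicit integral formulas (splitting $w_i''=P_1+P_2$ and differentiating $P_1$ once more), whereas you bound only $\|w\|_{C^0_\mu}$ and then invoke the Schauder estimate Lemma~\ref{lemma-estimate-PDE} with $\varphi=w(t_0,\cdot)$. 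Your route is shorter and perfectly valid; just note that Lemma~\ref{lemma-estimate-PDE} is stated as an \emph{a priori} estimate assuming $w\in C^{2,\alpha}_\mu$, so you should first remark that qualitative membership in $C^{2,\alpha}_\mu$ is immediate from the explicit formulas (each $w_i$ is $C^2$ with second derivative locally H\"older, and the finitely many $X_i$ are smooth).
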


\begin{proof} 
For each $i=0, 1, \cdots, I$, we set 
$$f_{i}(t)=\int_{\mathbb{S}^{n-1}}f(t,\theta)X_{i}(\theta)d\theta.$$
Then, 
$$\|f_i\|_{{C}^{0,\alpha}_{\mu}([t_0,\infty))}\le C\|f\|_{{C}^{0,\alpha}_{\mu}([t_0,\infty)\times\mathbb{S}^{n-1})},$$
and 
\begin{equation}\label{eq-series-f}
f(t,\theta)=\sum_{i=0}^{I}f_{i}(t)X_{i}(\theta).
\end{equation}
Let $L_i$ be the linear operator given as in \eqref{eq-U2-01b}. 

We first consider the ordinary differential equation 
\begin{equation}\label{li}
L_iw_{i}=f_{i}.
\end{equation}
We claim that there exists a solution 
$w_i\in {C}^{2,\alpha}_{\mu}([t_0,\infty))$ of \eqref{li} satisfying 
\begin{equation}\label{eq-C2alpha-ODE}
\|w_i\|_{{C}^{2,\alpha}_{\mu}([t_0,\infty))}
\leq C\|f_i\|_{{C}^{0,\alpha}_{\mu}([t_0,\infty))},
\end{equation}
where $C$ is a constant depending only on $n$, $\alpha$, $\mu$, and $\psi$, independent of $t_0$. 
By the classification of global radial solutions, $\psi$ is either a positive constant (which is unique) or 
a positive nonconstant periodic function. We consider the latter case. The former case is easier. 

Consider first $i>0$. By Lemma \ref{lemma-Asymptotics-U2a}(ii), the kernel $\mathrm{Ker}(L_i)$ is spanned by 
$\psi_{i}^{+}(t)=e^{-\rho_i t}p_i^+(t)$ and  $\psi_{i}^{-}(t)=e^{\rho_i t}p_i^-(t)$,
for some periodic functions $p_i^+$ and  $p_i^-$. 
Set
\begin{equation}\label{eq-expression-w-i}
{w}_{i}(t)=\psi_{i}^{+}(t)\int_{t}^{\infty}\frac{\psi_{i}^{-}(s)}{W(s)}f_{i}(s)ds
-\psi_{i}^{-}(t)\int_{t}^{\infty}\frac{\psi_{i}^{+}(s)}{W(s)}f_{i}(s)ds,
\end{equation}
where $W$ is the Wronskian determinant given by
\begin{equation*}
W=\psi^{+}_{i}(\psi^{-}_{i})'-\psi^{-}_{i}(\psi^{+}_{i})'.
\end{equation*}
A simple computation yields, for $t\geq t_0$,
\begin{equation}\label{eq-esti-0-w}
e^{\mu t}|{w}_{i}(t)|
\leq    C \sup_{t\geq t_0} e^{\mu t}|f_{i}(t)|=C \|f_i\|_{{C}^{0}_{\mu}([t_0,\infty))}.
\end{equation}
By a straightforward computation, we have
\begin{equation*}
{w}'_{i}(t)=(\psi_{i}^{+})'(t)\int_{t}^{\infty}\frac{\psi_{i}^{-}(s)}{W(s)}f_{i}(s)ds
-(\psi_{i}^{-})'(t)\int_{t}^{\infty}\frac{\psi_{i}^{+}(s)}{W(s)}f_{i}(s)ds,
\end{equation*}
and
\begin{align*}
{w}''_{i}(t)&=      (\psi_{i}^{+})''(t)\int_{t}^{\infty}\frac{\psi_{i}^{-}(s)}{W(s)}f_{i}(s)ds
-(\psi_{i}^{-})''(t)\int_{t}^{\infty}\frac{\psi_{i}^{+}(s)}{W(s)}f_{i}(s)ds   \\
&\qquad-(\psi_{i}^{+})'(t)\frac{\psi_{i}^{-}(t)}{W(t)}f_{i}(t)+(\psi_{i}^{-})'(t)\frac{\psi_{i}^{+}(t)}{W(t)}f_{i}(t).
\end{align*}
Since $\psi_{i}^{+}$, $\psi_{i}^{-}$ are multiples of periodic functions by $e^{-\rho_i t}$, $e^{\rho_i t}$, 
so are their derivatives. Similarly, we obtain, for $t\geq t_0$,
\begin{equation}\label{eq-esti-1-2-w}
e^{\mu t}|{w}'_{i}(t)|+e^{\mu t}|{w}''_{i}(t)|\leq C \|f_i\|_{{C}^{0}_{\mu}([t_0,\infty))}.
\end{equation}
For the H\"older semi-norms 
of $w_i''$, we write 
$${w}''_{i}=P_1+P_2,$$ 
where 
\begin{equation*}
P_1(t)=(\psi_{i}^{+})''(t)\int_{t}^{\infty}\frac{\psi_{i}^{-}(s)}{W(s)}f_{i}(s)ds-(\psi_{i}^{-})''(t)\int_{t}^{\infty}\frac{\psi_{i}^{+}(s)}{W(s)}f_{i}(s)ds,
\end{equation*}
and
\begin{equation*}
P_2(t)=-(\psi_{i}^{+})'(t)\frac{\psi_{i}^{-}(t)}{W(t)}f_{i}(t)+(\psi_{i}^{-})'(t)\frac{\psi_{i}^{+}(t)}{W(t)}f_{i}(t).
\end{equation*}
Then,
\begin{align*}
P'_1(t)&=      (\psi_{i}^{+})'''(t)\int_{t}^{\infty}\frac{\psi_{i}^{-}(s)}{W(s)}f_{i}(s)ds
-(\psi_{i}^{-})'''(t)\int_{t}^{\infty}\frac{\psi_{i}^{+}(s)}{W(s)}f_{i}(s)ds   \\
&\qquad-(\psi_{i}^{+})''(t)\frac{\psi_{i}^{-}(t)}{W(t)}f_{i}(t)+(\psi_{i}^{-})''(t)\frac{\psi_{i}^{+}(t)}{W(t)}f_{i}(t).
\end{align*}
Similarly, we have, for $t\geq t_0$,
\begin{equation*}
e^{\mu t}|P'_1(t)|\leq C \|f_i\|_{{C}^{0}_{\mu}([t_0,\infty))},
\end{equation*}
and hence, for $t\geq t_0+1$,
\begin{align*}
e^{\mu t}[P_1]_{C^\alpha([t-1,t+1])}   
\leq      C \|f_i\|_{{C}^{0}_{\mu}([t_0,\infty)\times\mathbb{S}^{n-1})}.
\end{align*}
Since $(\psi^{+}_{i})'\psi^{-}_{i}/{W}$ and $(\psi^{-}_{i})'\psi^{+}_{i}/{W}$ are periodic, we have, for $t\geq t_0+1$,
\begin{align*}
e^{\mu t}[P_2]_{C^\alpha([t-1,t+1])}   
\leq      C \|f_i\|_{{C}^{0, \alpha}_{\mu}([t_0,\infty)\times\mathbb{S}^{n-1})}.
\end{align*}
Therefore, for $t\geq t_0+1$,
\begin{align}\label{eq-Holder-semi}
e^{\mu t}[w_i'']_{C^\alpha([t-1,t+1])}   
\leq      C \|f_i\|_{{C}^{0, \alpha}_{\mu}([t_0,\infty)\times\mathbb{S}^{n-1})}.
\end{align}
By combining \eqref{eq-esti-0-w}, \eqref{eq-esti-1-2-w}, and \eqref{eq-Holder-semi}, we have \eqref{eq-C2alpha-ODE} for $i>0$.

Next, consider  $i=0$. By Lemma \ref{lemma-Asymptotics-U2a}(i), the kernel $\mathrm{Ker}(L_0)$ is spanned by 
$\psi_{0}^{+}(t)=p_0^+(t)$ and  $\psi_{0}^{-}(t)=atp_0^+(t)+p_0^-(t)$,
for some periodic functions $p_{0}^{+}$ and  $p_0^-$, and some constant $a$.  We set
\begin{align}\label{eq-expression-w-0}\begin{split}
{w}_{0}(t)&=    p^{+}_{0}(t)\int_{t}^{\infty}\frac{p_0^-(s)}{W(s)}f_{0}(s)ds-p_0^-(t)\int_{t}^{\infty}\frac{p_{0}^{+}(s)}{W(s)}f_{0}(s)ds  \\
&\qquad  -ap^{+}_{0}(t)\int_{t}^{\infty}\int_{s}^{\infty}\frac{p_{0}^{+}(\tau)}{W(\tau)}f_{0}(\tau)d\tau ds.
\end{split}\end{align}
We can prove \eqref{eq-C2alpha-ODE} for $i=0$ similarly. 

With the solution $w_i$  of \eqref{li} for $i=0, 1, \cdots, I$, we set  
\begin{equation*}
w(t,\theta)=\sum_{i=0}^{I}w_{i}(t)X_{i}(\theta).
\end{equation*}
Then, $Lw=f$ and, by \eqref{eq-C2alpha-ODE}, 
\begin{align*}
\|{w}\|_{{C}^{2, \alpha}_{\mu}([t_0,\infty)\times\mathbb{S}^{n-1})}
&\le \sum_{i=0}^IC\|{w}_{i}\|_{{C}^{2, \alpha}_{\mu}([t_0,\infty))}\\
&\le \sum_{i=0}^IC\|{f}_{i}\|_{{C}^{0, \alpha}_{\mu}([t_0,\infty))}
\leq C\|f\|_{{C}^{0,\alpha}_{\mu}([t_0,\infty)\times\mathbb{S}^{n-1})}.
\end{align*}
Therefore, $w$ is the desired solution. It is easy to see that such a $w$ is unique.
\end{proof}

The expressions \eqref{eq-expression-w-i} and \eqref{eq-expression-w-0} are from \cite{HanLi2010}. 
We point out that the uniqueness of solutions 
$w$ holds only with the extra requirement  $w(t, \cdot)\in 
\mathrm{span}\{X_0, X_1, \cdots, X_I\}$ for any $t\ge t_0$.

We next construct solutions in infinite dimensional subspaces in $L^2(\mathbb S^{n-1})$. 

\begin{lemma}\label{lemma-solution-infinite-dim}
Let $\alpha\in (0,1)$, $\mu>1$  and $\mu\neq \rho_i$ for any $i\ge 1$, 
and $f\in {C}^{0,\alpha}_{\mu}([t_0,\infty)\times\mathbb{S}^{n-1})$, with 
$\int_{\mathbb S^{n-1}}f(t, \cdot)X_id\theta=0$, for any $i$ with $\rho_i<\mu$ and 
any $t\ge t_0$. Then, there exists a unique solution $w\in {C}^{2,\alpha}_{\mu}([t_0,\infty)\times\mathbb{S}^{n-1})$
of \eqref{linyamabe} with $w=0$ on $\{t_0\}\times\mathbb{S}^{n-1}$. Moreover, 
\begin{equation}\label{eq-estimate-C2alpha}
\|w\|_{{C}^{2,\alpha}_{\mu}([t_0,\infty)\times\mathbb{S}^{n-1})}
\leq C\|f\|_{{C}^{0,\alpha}_{\mu}([t_0,\infty)\times\mathbb{S}^{n-1})},
\end{equation}
where $C$ is a positive constant depending only on $n$, $\alpha$, 
$\mu$, and $\psi$, independent of $t_0$.
\end{lemma}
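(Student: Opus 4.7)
Uniqueness is immediate from Lemma \ref{lemma-uniqueness}, since $\mu>1$. For existence, the strategy is a finite-cylinder approximation followed by a limiting argument, exploiting the orthogonality assumption on $f$ to ensure the hypotheses of Lemma \ref{lemma-estimate-L-infty} are met.

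First I would set up, for each $T$ with $T-t_0\ge 4$, the auxiliary Dirichlet problem on the truncated cylinder $\Omega_T=[t_0,T]\times\mathbb S^{n-1}$:
\begin{align*}
Lw_T&=f\quad \text{in } (t_0,T)\times\mathbb S^{n-1},\\
w_T&=0\quad \text{on } (\{t_0\}\cup\{T\})\times\mathbb S^{n-1}.
\end{align*}
Because $L$ respects the spherical-harmonic decomposition via the $L_i$ in \eqref{eq-U2-01b}, and because $f$ has vanishing $X_i$-projection for every $i$ with $\rho_i<\mu$, I seek $w_T$ in the closed subspace
\[
H_T=\Bigl\{u\in L^2(\Omega_T): \int_{\mathbb S^{n-1}} u(t,\theta)X_i(\theta)\,d\theta=0 \text{ for all } i \text{ with }\rho_i<\mu\Bigr\}.
\]
Constructing $w_T$ amounts to solving, for each $i$ with $\rho_i>\mu$, the ODE boundary-value problem $L_iw_{T,i}=f_i$ with $w_{T,i}(t_0)=w_{T,i}(T)=0$; for $\rho_i<\mu$ I simply take $w_{T,i}=0$ (permitted since $f_i=0$). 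Uniqueness for each ODE follows from the energy identity used in the proof of Lemma \ref{lemma-uniqueness}: multiplying $L_i w_{T,i}=0$ by $w_{T,i}$ and integrating by parts, the inequality $\lambda_i\ge 2n$ (valid because $\rho_i>\mu>1$) together with $0<\psi<1$ forces $w_{T,i}\equiv 0$. The Fredholm alternative then delivers existence; assembling $w_T=\sum_{\rho_i>\mu} w_{T,i}X_i$ gives a solution in $H_T$.

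Next I would invoke Lemma \ref{lemma-estimate-L-infty}, whose hypothesis is precisely the orthogonality encoded in $w_T\in H_T$, to obtain the uniform weighted bound
\[
\sup_{(t,\theta)\in\Omega_T} e^{\mu t}|w_T(t,\theta)|\le C\sup_{(t,\theta)\in\Omega_T} e^{\mu t}|f(t,\theta)|,
\]
with $C$ independent of both $t_0$ and $T$. Applying Lemma \ref{lemma-estimate-PDE} on each compact subcylinder (viewing $w_T$ as a solution on $[t_0,T']$ for $T'<T$) upgrades this to a uniform weighted $C^{2,\alpha}_\mu$ estimate. A diagonal Arzel\`a--Ascoli extraction along $T_k\to\infty$ produces a limit $w\in C^{2,\alpha}_{\mu,\mathrm{loc}}$ satisfying $Lw=f$ in $(t_0,\infty)\times\mathbb S^{n-1}$ and $w=0$ on $\{t_0\}\times\mathbb S^{n-1}$; the orthogonality property passes to the limit, and \eqref{eq-estimate-C2alpha} follows from lower semicontinuity of the weighted seminorms under local uniform convergence, together with a final application of Lemma \ref{lemma-estimate-PDE}.

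The main obstacle is the uniform-in-$T$ bound: this is exactly where the rescaling / blow-up argument of Lemma \ref{lemma-estimate-L-infty} is indispensable, because the sign of the zero-order coefficient of $L$ precludes any maximum-principle-based estimate. Crucially, the orthogonality condition must be preserved under the Fredholm construction for every $T$, so some care is needed to verify that the resonant modes $\rho_i<\mu$ do not enter $w_T$ through the finite-cylinder approximation; this is what justifies passing to Lemma \ref{lemma-estimate-L-infty} rather than needing a separate estimate.
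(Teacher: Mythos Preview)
Your overall architecture matches the paper's proof exactly: solve a Dirichlet problem on the truncated cylinder $[t_0,T]\times\mathbb S^{n-1}$, invoke Lemma~\ref{lemma-estimate-L-infty} for a weighted $L^\infty$ bound uniform in $T$, upgrade via Lemma~\ref{lemma-estimate-PDE}, and pass to the limit by diagonalization. Uniqueness via Lemma~\ref{lemma-uniqueness} is also how the paper handles it.

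The one place where you diverge is in the construction of $w_T$ itself. The paper does \emph{not} solve the ODEs $L_iw_{T,i}=f_i$ mode by mode and sum. Instead it minimizes the energy functional
\[
\mathcal E_T(w)=\int_{t_0}^{T}\!\!\int_{\mathbb S^{n-1}}\Bigl[(\partial_t w)^2+|\nabla_\theta w|^2+\Bigl(\tfrac{(n-2)^2}{4}-\tfrac{n(n+2)}{4}\psi^{4/(n-2)}\Bigr)w^2+2fw\Bigr]\,dt\,d\theta
\]
over $\{w\in H^1_0:\ w(t,\cdot)\in\mathcal X\}$, where $\mathcal X$ is your orthogonal subspace; coercivity follows from $\lambda_i\ge 2n$ and $0<\psi<1$, yielding $w_T$ directly as a weak (then classical) solution of the PDE. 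Your route --- solving infinitely many ODEs by Fredholm and writing $w_T=\sum_{\rho_i>\mu}w_{T,i}X_i$ --- is morally equivalent but leaves a gap: you never check that this infinite series converges in any topology strong enough to conclude $w_T\in C^{2,\alpha}$ solves $Lw_T=f$. One would need a uniform-in-$i$ estimate on the $w_{T,i}$ (say in $H^1$, coming from the same energy identity) and then elliptic regularity, or simply apply the Fredholm alternative to the full PDE operator on the subspace rather than mode by mode. The paper's variational construction avoids this bookkeeping entirely.
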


\begin{proof} Take any $T\ge t_0+4$. 
We first prove that there exists a solution ${w}_T\in {C}^{2,\alpha}([t_0,T]\times\mathbb{S}^{n-1})$ of
\begin{align}\label{linyamabelargeT}
\begin{split}
L{w}_T&={f} \quad\text{in } (t_0,T)\times\mathbb{S}^{n-1},\\
{w}_T&=0\quad\text{on } (\{t_0\}\cup\{T\})\times\mathbb{S}^{n-1}.
\end{split}
\end{align}
Consider the energy function
\begin{equation*}
\mathcal{E}_T(w)=\int_{t_0}^{T}\int_{\mathbb{S}^{n-1}}\Big[(\partial_tw)^2+|\nabla_{\theta}w|^2
+\Big(\frac{(n-2)^2}{4}
-\frac{n(n+2)}{4}\psi^{\frac{4}{n-2}}\Big)w^2+2{f}w\Big]dt d\theta.
\end{equation*}
Set 
$$\mathcal X=\Big\{u\in H^1(\mathbb{S}^{n-1});\, \int_{\mathbb S^{n-1}}u(\theta)X_i(\theta) d\theta=0
\text{ for any $i$ with $\rho_i<\mu$}\Big\}.$$
Then, for any $u\in \mathcal X$, 
$$\int_{\mathbb{S}^{n-1}}|\nabla_{\theta}u|^2d\theta\ge 2n 
\int_{\mathbb{S}^{n-1}}u^2d\theta,$$
since $\lambda_k>\lambda_n\geq 2n$ for any $k$ with $\rho_k>\mu>\rho_n=1$. 
Hence, for any $w\in H^1_0((t_0,T)\times\mathbb{S}^{n-1})$ with $w(t,\cdot)\in \mathcal X$ for any 
$t\in (t_0,T)$, we have 
\begin{equation*}
\mathcal{E}_T(w)\geq\int_{t_0}^T\int_{\mathbb{S}^{n-1}}\Big[(\partial_tw)^2
+\Big(\frac{(n+2)^2}{4}-\frac{n(n+2)}{4}\psi^{\frac{4}{n-2}}\Big)w^2
+2{f}w\Big]dtd\theta. 
\end{equation*}
By $0<\psi<1$, we conclude that $\mathcal{E}_T$ is coercive and weak lower semi-continuous. 
Hence, we can find a minimizer ${w}_T$ of $\mathcal E_T$ in the space 
\begin{align*}\{w\in H^1_0((t_0,T)\times\mathbb{S}^{n-1});\, w(t,\cdot)\in \mathcal X
\text{ for any $t\in (t_0,T)$}\}.\end{align*}
Since $f(t,\cdot)\in \mathcal X$ for any 
$t\in (t_0,T)$, $w_T$ is a solution of \eqref{linyamabelargeT}, 
with $w_T(t,\cdot)\in \mathcal X$
for any $t\in (t_0,T)$.

By Lemma \ref{lemma-estimate-L-infty}, we have 
\begin{equation*}
\sup_{(t,\theta)\in[t_0, T]\times\mathbb{S}^{n-1}}e^{\mu t}|{w}_T(t,\theta)|
\leq C\sup_{(t,\theta)\in[t_0, T]\times\mathbb{S}^{n-1}}e^{\mu t}|f(t,\theta)|,
\end{equation*}
where $C$ is a positive constant depending only on $n$, $\alpha$, 
$\mu$, and $\psi$, independent of $t_0$ and $T$. 
For each fixed $T_0>t_0$, 
consider $[t_0,t_0+T_0]\times\mathbb{S}^{n-1}\subset[t_0,t_0+T_0+1]\times\mathbb{S}^{n-1}$. 
By the interior and boundary Schauder estimates, ${w}_T(t_0,\theta)=0$,  and passing to a subsequence, 
${w}_T$ converges to a ${C}^{2,\alpha}$ solution $w$ of \eqref{linyamabe}  in $[t_0,t_0+T_0]\times\mathbb{S}^{n-1}$
with $w=0$ on $\{t_0\}\times\mathbb{S}^{n-1}$, as $T\to\infty$.
By a diagonalization process, ${w}_T$ converges to 
a ${C}^{2,\alpha}$  solution ${w}$ of \eqref{linyamabe} in $[t_0,\infty)\times\mathbb{S}^{n-1}$, 
with $w=0$ on $\{t_0\}\times\mathbb{S}^{n-1}$. Moreover,
\begin{equation*}
\sup_{(t,\theta)\in[t_0, \infty)\times\mathbb{S}^{n-1}}e^{\mu t}|{w}(t,\theta)|
\leq C\sup_{(t,\theta)\in[t_0, \infty)\times\mathbb{S}^{n-1}}e^{\mu t}|f(t,\theta)|,
\end{equation*}
or 
\begin{equation}\label{eq-estimate-infinite-interval}
\|w\|_{{C}^{0}_{\mu}([t_0,\infty)\times\mathbb{S}^{n-1})}\leq C\|f\|_{{C}^{0,\alpha}_{\mu}([t_0,\infty)\times\mathbb{S}^{n-1})},
\end{equation}
where $C$ is a positive constant depending only on $n$, $\alpha$, 
$\mu$, and $\psi$, independent of $t_0$. 
By substituting \eqref{eq-estimate-infinite-interval} in \eqref{eq-estimate-a-priori} with $\varphi=0$, we have 
\eqref{eq-estimate-C2alpha}. 
\end{proof}

Now we are ready to prove the main solvability result in this section. 

\begin{theorem}\label{linthm}
Let $\alpha\in (0,1)$, $\mu>1$ with $\mu\neq \rho_i$ for any $i$, 
and $f\in {C}^{0,\alpha}_{\mu}([t_0,\infty)\times\mathbb{S}^{n-1})$. 
Then, \eqref{linyamabe} admits a solution $w\in {C}^{2,\alpha}_{\mu}([t_0,\infty)\times\mathbb{S}^{n-1})$
and 
\begin{equation}\label{eq-estimate-main-PDE}
\|w\|_{{C}^{2,\alpha}_{\mu}([t_0,\infty)\times\mathbb{S}^{n-1})}
\leq C\|f\|_{{C}^{0,\alpha}_{\mu}([t_0,\infty)\times\mathbb{S}^{n-1})},
\end{equation}
where $C$ is a positive constant depending only on $n$, $\alpha$, 
$\mu$, and $\psi$, independent of $t_0$.
Moreover, the correspondence $f\mapsto w$ is linear. \end{theorem}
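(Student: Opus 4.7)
The plan is to split $f$ into its projection onto a finite-dimensional subspace of spherical harmonics and the complementary part, then invoke Lemma \ref{lemma-solution-finite-dim} and Lemma \ref{lemma-solution-infinite-dim} separately and add the two solutions.

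First, since $\{\rho_i\}$ is strictly increasing and divergent to infinity, $\mu>1=\rho_1=\cdots=\rho_n$, and $\mu\neq\rho_i$ for all $i$, there exists a unique integer $I\ge n$ with $\rho_I<\mu<\rho_{I+1}$. Define, for $i=0,1,\dots,I$,
\begin{equation*}
f_i(t)=\int_{\mathbb S^{n-1}}f(t,\theta)X_i(\theta)\,d\theta,
\quad \Pi f(t,\theta)=\sum_{i=0}^{I}f_i(t)X_i(\theta),\quad g=f-\Pi f.
\end{equation*}
By construction $\Pi f(t,\cdot)\in\mathrm{span}\{X_0,\dots,X_I\}$ for each $t$, and $\int_{\mathbb S^{n-1}}g(t,\cdot)X_i\,d\theta=0$ for every $i$ with $\rho_i<\mu$.

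Next I would establish the estimate
\begin{equation*}
\|\Pi f\|_{C^{0,\alpha}_{\mu}([t_0,\infty)\times\mathbb S^{n-1})}
+\|g\|_{C^{0,\alpha}_{\mu}([t_0,\infty)\times\mathbb S^{n-1})}
\le C\|f\|_{C^{0,\alpha}_{\mu}([t_0,\infty)\times\mathbb S^{n-1})},
\end{equation*}
with $C$ depending only on $n$ and $\mu$ (through $I$ and the fixed smooth basis $X_0,\dots,X_I$), and in particular independent of $t_0$. This is a pointwise-in-$t$ bound: the $C^0$ weighted norm of each $f_i(t)X_i(\theta)$ is controlled by $\|f\|_{C^0_\mu}$ times $\|X_i\|_{L^1}\|X_i\|_{L^\infty}$, and the H\"older semi-norm on each cylindrical slab $[t-1,t+1]\times\mathbb S^{n-1}$ is controlled similarly using $\|X_i\|_{C^\alpha(\mathbb S^{n-1})}$ for the $\theta$-direction and differentiation under the integral sign for the $t$-direction. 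The estimate for $g$ then follows by the triangle inequality.

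Now apply Lemma \ref{lemma-solution-finite-dim} with this same $I$ to $\Pi f$: the hypothesis $\mu>\rho_I$ is exactly the choice of $I$, so we obtain a unique $w_1\in C^{2,\alpha}_{\mu}$ with $w_1(t,\cdot)\in\mathrm{span}\{X_0,\dots,X_I\}$ solving $Lw_1=\Pi f$ and satisfying $\|w_1\|_{C^{2,\alpha}_{\mu}}\le C\|\Pi f\|_{C^{0,\alpha}_{\mu}}$. Next apply Lemma \ref{lemma-solution-infinite-dim} to $g$: its hypotheses ($\mu>1$, $\mu\neq\rho_i$, orthogonality to $X_i$ for $\rho_i<\mu$) are all satisfied, producing $w_2\in C^{2,\alpha}_{\mu}$ with $w_2|_{t=t_0}=0$, $Lw_2=g$, and $\|w_2\|_{C^{2,\alpha}_{\mu}}\le C\|g\|_{C^{0,\alpha}_{\mu}}$. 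Setting $w=w_1+w_2$ yields $Lw=f$ and, combining the three estimates above,
\begin{equation*}
\|w\|_{C^{2,\alpha}_{\mu}([t_0,\infty)\times\mathbb S^{n-1})}\le C\|f\|_{C^{0,\alpha}_{\mu}([t_0,\infty)\times\mathbb S^{n-1})}.
\end{equation*}
Linearity of $f\mapsto w$ follows from the linearity of $\Pi$, of Lemma \ref{lemma-solution-finite-dim}, and of the construction in Lemma \ref{lemma-solution-infinite-dim} (selecting the specific $w_2$ produced there).

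The only nonroutine step is the weighted H\"older projection estimate, and even that is essentially a finite-dimensional bookkeeping exercise once one notes that the projection coefficients $f_i(t)$ inherit the weighted regularity from $f$ with constants independent of $t_0$; everything else is a direct application of already-proved lemmas. No uniqueness is claimed in the theorem, which is appropriate because the decomposition $w=w_1+w_2$ can be modified by adding any homogeneous solution whose $L^2(\mathbb S^{n-1})$ spectrum lies in the range $\rho_i>\mu$ and which still decays like $e^{-\mu t}$.
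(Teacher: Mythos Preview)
Your proposal is correct and follows essentially the same route as the paper: choose $I$ maximal with $\rho_I<\mu$, project $f$ onto $\mathrm{span}\{X_0,\dots,X_I\}$, apply Lemma~\ref{lemma-solution-finite-dim} to the projected part and Lemma~\ref{lemma-solution-infinite-dim} to the remainder, then add. Your added paragraph on the weighted H\"older bound for the projection $\Pi$ makes explicit a step the paper leaves implicit; one small slip is calling $\{\rho_i\}$ \emph{strictly} increasing (recall $\rho_1=\cdots=\rho_n=1$), but you immediately use the correct equalities, so the argument is unaffected.
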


\begin{proof} Take $I$ to be the largest integer such that $\rho_I<\mu$. Set, for $i=0, 1, \cdots, I$, 
$$f_i(t)=\int_{\mathbb S^{n-1}}f(t, \theta)X_i(\theta)d\theta.$$ 
First, let $w_1\in {C}^{2,\alpha}_{\mu}([t_0,\infty)\times\mathbb{S}^{n-1})$ be the unique solution of 
$$Lw_1=\sum_{i=0}^If_iX_i\quad\text{in }[t_0,\infty)\times\mathbb{S}^{n-1},$$ 
as in Lemma \ref{lemma-solution-finite-dim}. 
By Lemma \ref{lemma-solution-infinite-dim}, let $w_2\in {C}^{2,\alpha}_{\mu}([t_0,\infty)\times\mathbb{S}^{n-1})$ 
be the unique solution of 
\begin{align*}Lw&=f-\sum_{i=0}^If_iX_i\quad\text{in }[t_0,\infty)\times\mathbb{S}^{n-1},\\
w&=0\quad\text{on }\{t_0\}\times\mathbb{S}^{n-1}.\end{align*} 
Then, $w=w_1+w_2$ is the desired solution. \end{proof}

\begin{remark}\label{remark-inverse} We denote by $L^{-1}$ the 
correspondence $f\mapsto w$ as in Theorem \ref{linthm}. 
Then, 
\begin{equation}\label{linyamabeinverse}
L^{-1}: {C}^{0,\alpha}_{\mu}([t_0,\infty)\times\mathbb{S}^{n-1})\to {C}^{2,\alpha}_{\mu}([t_0,\infty)\times\mathbb{S}^{n-1})
\end{equation}
is a bounded linear operator, and the bound of $L^{-1}$ does not depend on $t_0$.
We emphasize that $L^{-1}$ has a built-in boundary condition on $t=t_0$. 
\end{remark}

\section{Approximate Solutions}\label{sec-approximate-solution}

In this section, we introduce the notion of approximate solutions  
and prove Theorem \ref{mainthm} by the contraction mapping principle. 
We also demonstrate that we can always construct approximate solutions by perturbing 
solutions of the linearized Yamabe equation. 

For convenience, we set 
\begin{equation}\label{eq-yamabe-operator-v}
\mathcal N(v)=v_{tt}+\Delta_{\theta}v-\frac14(n-2)^2v+\frac14n(n-2)v^{\frac{n+2}{n-2}}.\end{equation}
Obviously, $v$ is a solution of \eqref{eq-yamabe-v} if $\mathcal N(v)=0$. 

We now prove the main result in this section. 

\begin{theorem}\label{mainthm-v}
Let $\psi$ be a positive periodic solution of \eqref{eq-psi}, $\mathcal I$ the index set associated with $\psi$, 
and $\mu>1$ with $\mu\notin \mathcal I$. Suppose that $\widehat v\in {C}^{2,\alpha}([0,\infty)\times\mathbb{S}^{n-1})$
satisfies 
\begin{equation}\label{eq-assumption-leading-v}|(\widehat v-\psi)(t, \theta)|+|\nabla(\widehat v-\psi)(t, \theta)|\to 0
\quad\text{as $t\to 0$ uniformly in $\theta\in\mathbb S^{n-1}$},
\end{equation}
and, for any $(t,\theta)\in  [0,\infty)\times \mathbb S^{n-1}$, 
\begin{equation}\label{eq-assumption-approximate-v}
|\mathcal N(\widehat v)(t, \theta)|+|\nabla(\mathcal N(\widehat v))(t, \theta)|\le Ce^{-\mu t},
\end{equation}
for some positive constant $C$. 
Then, there exist a $t_0>0$ and a solution 
$v\in {C}^{2,\alpha}([t_0,\infty)\times\mathbb{S}^{n-1})$ of \eqref{eq-yamabe-v} such that,  
for any $(t,\theta)\in(t_0,\infty)\times\mathbb{S}^{n-1}$,
\begin{equation*}
|v(t,\theta)-\widehat v(t,\theta)|\leq C e^{-\mu t},
\end{equation*}
where $C$ is a positive constant.
\end{theorem}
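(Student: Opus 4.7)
The plan is to seek the solution in the form $v = \widehat v + w$ and to determine the correction $w$ by the contraction mapping principle, inverting the linearization by means of the bounded operator $L^{-1}$ of Remark \ref{remark-inverse} on $[t_0,\infty) \times \mathbb S^{n-1}$ for sufficiently large $t_0$. Taylor expanding $\mathcal N$ at $\widehat v$ yields
\begin{equation*}
\mathcal N(\widehat v + w) = \mathcal N(\widehat v) + Lw + (L_{\widehat v} - L)w + Q(\widehat v, w),
\end{equation*}
where $L$ is the linearization at $\psi$ from \eqref{eq-linearization} and $Q(\widehat v, w)$ collects the terms of order $\ge 2$ in $w$. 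The coefficient error $L_{\widehat v} - L$ is multiplication by $\tfrac{n(n+2)}{4}\bigl(\widehat v^{4/(n-2)} - \psi^{4/(n-2)}\bigr)$, which tends to $0$ as $t \to \infty$ by \eqref{eq-assumption-leading-v}. Consequently $\mathcal N(v) = 0$ becomes the fixed-point problem
\begin{equation*}
w = T(w) := -L^{-1}\bigl[\mathcal N(\widehat v) + (L_{\widehat v} - L)w + Q(\widehat v, w)\bigr],
\end{equation*}
to be solved in $C^{2,\alpha}_\mu([t_0,\infty)\times\mathbb S^{n-1})$.

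I would run the contraction on the closed ball $\mathcal B_\delta = \{\|w\|_{C^{2,\alpha}_\mu} \le \delta\}$. The gradient bound on $\mathcal N(\widehat v)$ in \eqref{eq-assumption-approximate-v} controls the H\"older seminorm on each unit window, so $\mathcal N(\widehat v) \in C^{0,\alpha}_\mu$ with norm bounded by the data. The key smallness estimates in the $C^{0,\alpha}_\mu$-norm are (i) $\|(L_{\widehat v} - L)w\|_{C^{0,\alpha}_\mu} \le \varepsilon(t_0)\|w\|_{C^{2,\alpha}_\mu}$, with $\varepsilon(t_0)\to 0$ as $t_0 \to \infty$, using the uniform decay of $\widehat v - \psi$ and of its gradient from \eqref{eq-assumption-leading-v}; and (ii) $\|Q(\widehat v, w)\|_{C^{0,\alpha}_\mu} \le Ce^{-\mu t_0}\|w\|_{C^{2,\alpha}_\mu}^2$, since $Q$ is at least quadratic in $w$ and smooth near the positive limit $\psi$, while any such quadratic expression gains a factor $e^{-\mu t_0}$ via the embedding $C^{0,\alpha}_{2\mu} \hookrightarrow C^{0,\alpha}_\mu$ on $[t_0,\infty)$. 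Composing with $L^{-1}$, whose operator norm is uniform in $t_0$ by Remark \ref{remark-inverse}, and choosing $t_0$ large enough and $\delta$ comparable to $\|L^{-1}\mathcal N(\widehat v)\|_{C^{2,\alpha}_\mu}$, I obtain $T(\mathcal B_\delta) \subset \mathcal B_\delta$ together with $\|T(w_1) - T(w_2)\|_{C^{2,\alpha}_\mu} \le \tfrac12\|w_1 - w_2\|_{C^{2,\alpha}_\mu}$, producing a unique fixed point $w$.

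Setting $v = \widehat v + w$ then gives a $C^{2,\alpha}$ solution of \eqref{eq-yamabe-v} on $[t_0,\infty) \times \mathbb S^{n-1}$ with $|v - \widehat v| = |w| \le Ce^{-\mu t}$, as required. The principal obstacle is estimate (i): one must show that the H\"older seminorm of the coefficient $\widehat v^{4/(n-2)} - \psi^{4/(n-2)}$ on each unit window decays uniformly to $0$ as $t_0 \to \infty$, which is exactly why \eqref{eq-assumption-leading-v} is imposed not merely on values but also on the gradient of $\widehat v - \psi$. A secondary point is that $L^{-1}$ carries a built-in boundary condition at $t = t_0$, but this is compatible with the construction because we do not prescribe $v(t_0,\cdot)$; positivity of $v$ on $[t_0,\infty) \times \mathbb S^{n-1}$ then follows automatically for large $t_0$, since $w$ decays exponentially while $\widehat v$ remains close to the strictly positive function $\psi$.
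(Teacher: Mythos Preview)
Your proposal is correct and follows essentially the same route as the paper: set $v=\widehat v+w$, rewrite $\mathcal N(\widehat v+w)=0$ as a fixed-point equation for $w$ by applying the bounded inverse $L^{-1}$ of Remark~\ref{remark-inverse}, and close the contraction on a ball in $C^{2,\alpha}_\mu([t_0,\infty)\times\mathbb S^{n-1})$ for $t_0$ large. The only cosmetic difference is book-keeping of the nonlinear error: you split it into the coefficient error $(L_{\widehat v}-L)w$ plus a quadratic remainder, whereas the paper bundles both into $P(w)=wQ(w)$ with $Q(w)=\tfrac{n(n+2)}{4}\int_0^1\bigl[(\widehat v+sw)^{4/(n-2)}-\psi^{4/(n-2)}\bigr]ds$, so that the smallness $|Q(w)|+|\nabla Q(w)|\le C(\epsilon(t)+Be^{-\mu t})$ captures your estimates (i) and (ii) simultaneously.
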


\begin{proof} The proof consists of several steps. 

{\it Step 1.} We rewrite the equation \eqref{eq-yamabe-v}. 
Let $\mathcal N$ be the operator introduced in \eqref{eq-yamabe-operator-v}. 
We will find $w\in\mathcal{C}^{2,\alpha}_{\mu}([t_0,\infty)\times\mathbb{S}^{n-1})$ such that
\begin{equation}\label{eq-equation-w}
\mathcal N(\widehat v+w)=0.\end{equation}
We rewrite this equation as 
\begin{align*}
Lw+\mathcal N(\widehat v) 
+P(w)
=0,
\end{align*}
where 
\begin{equation}\label{eq-P}
P(w)=\frac{n(n-2)}{4}\big[(\widehat v+w)^{\frac{n+2}{n-2}}-\widehat v^{\frac{n+2}{n-2}}\big]-\frac{n(n+2)}{4}\psi^{\frac{4}{n-2}}w. 
\end{equation}
With the operator $L^{-1}$ introduced in Remark \ref{remark-inverse}, 
we can rewrite it further as 
\begin{align*}
w=      L^{-1}\big[-\mathcal N(\widehat v) 
-P(w)
\big]. 
\end{align*}
We define the mapping $\mathcal{T}$ by
\begin{align}\label{eq-T}
\mathcal{T}(w)= L^{-1}\big[-\mathcal N(\widehat v) 
-P(w)
\big].  
\end{align}
We will prove that $\mathcal{T}$ is a contraction on some ball in ${C}^{2,\alpha}_{\mu}([t_0,\infty)\times\mathbb{S}^{n-1})$,
for some $t_0$ large. For brevity, we set  
$$\mathcal X_{B, t_0}=\{w\in {C}^{2,\alpha}_{\mu}([t_0,\infty)\times\mathbb{S}^{n-1}); 
\|w\|_{{C}^{2,\alpha}_{\mu}([t_0,\infty)\times\mathbb{S}^{n-1})}\le B\}.$$

{\it Step 2.} We prove that $\mathcal{T}$ maps $\mathcal X_{B, t_0}$ to itself, 
for some fixed $B$ and any $t_0$ sufficiently large; namely, for any $w\in {C}^{2,\alpha}_{\mu}([t_0,\infty)\times\mathbb{S}^{n-1})$ 
with $\|w\|_{{C}^{2,\alpha}_{\mu}(t_0,\infty)\times\mathbb{S}^{n-1})}\leq B$,
we have $\mathcal{T}(w)\in{C}^{2,\alpha}_{\mu}([t_0,\infty)\times\mathbb{S}^{n-1})$ and
$\|\mathcal{T}(w)\|_{{C}^{2,\alpha}_{\mu}([t_0,\infty)\times\mathbb{S}^{n-1})}\leq B$.

First, by \eqref{eq-assumption-approximate-v}, we have 
$$\|\mathcal N(\widehat v)\|_{{C}^{1}_{\mu}([t_0,\infty)\times\mathbb{S}^{n-1})}\le C_1.$$ 
Next, set 
\begin{equation}\label{eq-definition-Q}Q(w)=\frac{n(n+2)}{4}\int_{0}^{1}\big[(\widehat v+sw)^{\frac{4}{n-2}}-\psi^{\frac{4}{n-2}}\big]ds.
\end{equation}
Then, $P(w)=wQ(w)$. 
Take any $w\in{C}^{2,\alpha}_{\mu}([t_0,\infty)\times\mathbb{S}^{n-1})$ with
$\|w\|_{{C}^{2,\alpha}_{\mu}([t_0,\infty)\times\mathbb{S}^{n-1})}\leq B$, for some $B$ to be determined.
Note that 
$$|\widehat v-\psi|+|\nabla(\widehat v-\psi)|\le \epsilon(t),$$
where $\epsilon$ is a decreasing function with $\epsilon(t)\to0$ as $t\to \infty$, and 
$$|w|+|\nabla w|\le Be^{-\mu t}.$$ 
Then, for $t\ge t_0$,
\begin{equation}\label{eq-estimate-Q} 
|Q(w)|+|\nabla Q(w)|\le C_2(\epsilon(t)+Be^{-\mu t}),\end{equation}
and hence, 
\begin{align*}\|P(w)\|_{{C}^{1}_{\mu}([t_0,\infty)\times\mathbb{S}^{n-1})}
&\le C_2(\epsilon(t_0)+Be^{-\mu t_0})\|w\|_{{C}^{1}_{\mu}([t_0,\infty)\times\mathbb{S}^{n-1})}\\
&\le C_2(\epsilon(t_0)+Be^{-\mu t_0})B.\end{align*}
By Theorem \ref{linthm}, we get 
\begin{align*}
\|\mathcal{T}(w)\|_{{C}^{2,\alpha}_{\mu}([t_0,\infty)\times\mathbb{S}^{n-1})}
&\leq C \|\mathcal N(\widehat v) 
+P(w)\|_{{C}^{0,\alpha}_{\mu}([t_0,\infty)\times\mathbb{S}^{n-1})}\\
&\le C\big[C_1+C_2(\epsilon(t_0)+Be^{-\mu t_0})B\big],
\end{align*}
where $C$, $C_1$, and $C_2$ are positive constants independent of $t_0$. 
We first take $B\ge 2CC_1$ and then take $t_0$ large such that $CC_2(\epsilon(t_0)+Be^{-\mu t_0})\le 1/2$. Then, 
\begin{align*}
\|\mathcal{T}(w)\|_{{C}^{2,\alpha}_{\mu}([t_0,\infty)\times\mathbb{S}^{n-1})}\le B.\end{align*}
This is the desired estimate.

{\it Step 3.} We prove that $\mathcal{T}: \mathcal X_{B, t_0}\to\mathcal X_{B, t_0}$ 
is a contraction, i.e., for any  $w_1,w_2\in \mathcal X_{B, t_0}$, 
\begin{equation}\label{eq-contraction}
\|\mathcal{T}(w_1)-\mathcal{T}(w_2)\|_{{C}^{2,\alpha}_{\mu}([t_0,\infty)\times\mathbb{S}^{n-1})}
\leq\lambda\|w_1-w_2\|_{{C}^{2,\alpha}_{\mu}([t_0,\infty)\times\mathbb{S}^{n-1})}, 
\end{equation}
for some constant $\lambda\in(0,1)$. 

We note 
$$\mathcal{T}(w_1)-\mathcal{T}(w_2)=-L^{-1}\big[P(w_1)-P(w_2)\big],$$
and 
\begin{align*}P(w_1)-P(w_2)&=w_1Q(w_1)-w_2Q(w_2)\\
&=(w_1-w_2)Q(w_1)+w_2(Q(w_1)-Q(w_2)).\end{align*}
By \eqref{eq-definition-Q}, we have
$$Q(w_1)-Q(w_2)=\frac{n(n+2)}{4}\int_{0}^{1}\big[(\widehat v+sw_1)^{\frac{4}{n-2}}-(\widehat v+sw_2)^{\frac{4}{n-2}}\big]ds.
$$
Then,  
$$|Q(w_1)-Q(w_2)|+|\nabla(Q(w_1)-Q(w_2))|\le C\big(|w_1-w_2|+|\nabla(w_1-w_2)|\big).$$
By \eqref{eq-estimate-Q}, we obtain, for any $t\ge t_0$, 
\begin{align*} 
&|P(w_1)-P(w_2)|+|\nabla(P(w_1)-P(w_2))|\\
&\qquad\le C(\epsilon(t)+Be^{-\mu t})\big(|w_1-w_2|+|\nabla(w_1-w_2)|\big),
\end{align*}
and hence 
\begin{align*}
&\|P(w_1)-P(w_2)\|_{{C}^{1}_{\mu}([t_0,\infty)\times\mathbb{S}^{n-1})}\\
&\qquad\le C(\epsilon(t_0)+Be^{-\mu t_0})\|w_1-w_2\|_{{C}^{1}_{\mu}([t_0,\infty)\times\mathbb{S}^{n-1})}.
\end{align*}
By Theorem \ref{linthm}, we have 
\begin{align*}
&\|\mathcal{T}(w_1)-\mathcal{T}(w_2)\|_{{C}^{2,\alpha}_{\mu}([t_0,\infty)\times\mathbb{S}^{n-1})}\\
&\qquad \leq C\|P(w_1)-P(w_2)\|_{{C}^{0,\alpha}_{\mu}([t_0,\infty)\times\mathbb{S}^{n-1})}\\
&\qquad\le C(\epsilon(t_0)+Be^{-\mu t_0})\|w_1-w_2\|_{{C}^{2,\alpha}_{\mu}([t_0,\infty)\times\mathbb{S}^{n-1})}. 
\end{align*}
We obtain \eqref{eq-contraction} by choosing $t_0$ sufficiently large. 

{\it Step 4.} We now finish the proof. 
By the contraction mapping principle, we hence have  $w\in{C}^{2,\alpha}_{\mu}([t_0,\infty)\times\mathbb{S}^{n-1})$ satisfying
$\mathcal{T}(w)=w.$ 
This yields  a solution $w\in{C}^{2,\alpha}_{\mu}([t_0,\infty)\times\mathbb{S}^{n-1})$ of \eqref{eq-equation-w}.
Then, $v=\widehat v+w$ is a solution of \eqref{eq-yamabe-v}. 
\end{proof} 

In view of \eqref{eq-def-v}, 
the assumptions \eqref{eq-assumption-leading-u}
and \eqref{eq-assumption-approximate-u}
are equivalent to \eqref{eq-assumption-leading-v}
and \eqref{eq-assumption-approximate-v},  respectively. 
Hence, Theorem \ref{mainthm} follows from Theorem \ref{mainthm-v}. 

\smallskip 

The function $\widehat v$ satisfying \eqref{eq-assumption-leading-v}
and \eqref{eq-assumption-approximate-v} will be called 
an {\it approximate solution} of \eqref{eq-yamabe-v} of order $\mu$ with the leading term $\psi$.

To end this section, 
we provide a general procedure to construct approximate solutions
by perturbing solutions of the linearized equation. 
We will prove that we can always find approximate solutions with a designated order by 
a perturbation of solutions of the linearized equations which decay at infinity. 

We  need the following result. 

\begin{lemma}\label{lemma-SphericalHarmonics} Let $Y_k$ and $Y_l$ be spherical harmonics of 
degree $k$ and $l$, respectively. Then, 
$$Y_kY_l=\sum_{i=0}^{k+l}Z_i,$$
where $Z_i$ is some spherical harmonic of degree $i$, for $i=0,1, \cdots, k+l$. 
\end{lemma}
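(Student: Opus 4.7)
The plan is to translate the problem from spherical harmonics on $\mathbb{S}^{n-1}$ to harmonic homogeneous polynomials on $\mathbb{R}^n$, where multiplication is natural, and then decompose the resulting polynomial into its harmonic components.

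First, I would recall that any spherical harmonic $Y_k$ of degree $k$ is the restriction to $\mathbb{S}^{n-1}$ of a unique harmonic homogeneous polynomial $\widetilde{Y}_k$ of degree $k$ on $\mathbb{R}^n$; similarly for $Y_l$. The product $\widetilde{Y}_k \widetilde{Y}_l$ is then a homogeneous polynomial of degree $k+l$ on $\mathbb{R}^n$, although it is no longer harmonic in general.

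The key tool is the classical Fourier decomposition of homogeneous polynomials into harmonic pieces: every homogeneous polynomial $P$ of degree $m$ on $\mathbb{R}^n$ admits a unique representation
\begin{equation*}
P(x) \;=\; \sum_{j=0}^{\lfloor m/2 \rfloor} |x|^{2j}\, \widetilde{Z}_{m-2j}(x),
\end{equation*}
where each $\widetilde{Z}_{m-2j}$ is a harmonic homogeneous polynomial of degree $m-2j$ on $\mathbb{R}^n$. This follows from the direct sum decomposition of the space of homogeneous polynomials of degree $m$ into harmonic polynomials and the image of multiplication by $|x|^2$ applied to homogeneous polynomials of degree $m-2$, which in turn follows from the surjectivity of $\Delta$ on homogeneous polynomials of the appropriate degree; this is standard (and can be proved by a dimension count combined with the fact that $\Delta$ and multiplication by $|x|^2$ are adjoint under a natural inner product on polynomials).

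Applying this decomposition with $P = \widetilde{Y}_k \widetilde{Y}_l$ and $m=k+l$, and then restricting to $\mathbb{S}^{n-1}$ where $|x|^{2j}=1$, I obtain
\begin{equation*}
Y_k(\theta) Y_l(\theta) \;=\; \sum_{j=0}^{\lfloor (k+l)/2 \rfloor} Z_{k+l-2j}(\theta),
\end{equation*}
where each $Z_{k+l-2j}:=\widetilde{Z}_{k+l-2j}\big|_{\mathbb{S}^{n-1}}$ is a spherical harmonic of degree $k+l-2j$. Setting the missing $Z_i$ equal to zero for $i$ of the wrong parity or sign, this gives the claimed expansion $Y_k Y_l=\sum_{i=0}^{k+l} Z_i$. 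The main (and essentially only) work is invoking the Fourier decomposition of homogeneous polynomials; the rest is bookkeeping.
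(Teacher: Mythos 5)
Your proof is correct and follows essentially the same route as the paper, which proves the lemma by invoking exactly this classical decomposition of a homogeneous polynomial into harmonic homogeneous pieces (with the citation to Stein--Weiss) after identifying $Y_kY_l$ with the restriction of the product of the harmonic polynomial extensions. Your write-up simply spells out the details that the paper leaves to the reference.
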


Lemma \ref{lemma-SphericalHarmonics} follows easily from a well-known 
decomposition of homogeneous polynomials into a finite linear combination of 
homogeneous harmonic polynomials. (Refer to \cite{Stein1971}.)

\begin{prop}\label{prop-approximate-solution}
Let $\psi$ be a positive periodic solution of \eqref{eq-psi}, $\mathcal I$ the index set associated with $\psi$, 
and $\mu>1$ with $\mu\notin \mathcal I$. Suppose that $\eta$ is a solution of $L\eta=0$ on 
$\mathbb R\times \mathbb S^{n-1}$, with $\eta(t,\cdot)\to 0$ as $t\to\infty$ uniformly on $\mathbb S^{n-1}$. 
Then for some $t_0>0$, there exists a smooth function 
$\widetilde \eta$ on $[t_0,\infty)\times \mathbb S^{n-1}$ such that 
$\widehat v=\psi+\eta+\widetilde \eta$ satisfies 
\eqref{eq-assumption-leading-v}
and \eqref{eq-assumption-approximate-v}.\end{prop}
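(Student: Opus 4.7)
The plan is to construct $\widetilde\eta = w_1 + \cdots + w_N$ as a finite sum of correction terms, each canceling the slowest-remaining piece of the nonlinear error. Setting $\widehat v = \psi + \eta + \widetilde\eta$ and using $\mathcal N(\psi) = 0$ together with $L\eta = 0$, Taylor expansion gives
\[
\mathcal N(\widehat v) = L\widetilde\eta + R(\eta + \widetilde\eta),
\]
where
\[
R(\xi) = \tfrac{n(n-2)}{4}\Big[(\psi + \xi)^{\frac{n+2}{n-2}} - \psi^{\frac{n+2}{n-2}} - \tfrac{n+2}{n-2}\psi^{\frac{4}{n-2}}\xi\Big]
\]
is at least quadratic in $\xi$ with smooth coefficients that are periodic in $t$. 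To obtain \eqref{eq-assumption-approximate-v}, it suffices to choose $\widetilde\eta$ so that $L\widetilde\eta + R(\eta + \widetilde\eta)$ and its gradient are $O(e^{-\mu t})$.

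First I would decompose $\eta = \sum_i \eta_i(t)X_i(\theta)$ into spherical-harmonic modes. Each $\eta_i$ solves $L_i\eta_i = 0$, so by Lemmas~\ref{lemma-Asymptotics-U1a}--\ref{lemma-Asymptotics-U2a} and the assumed decay, $\eta_i(t) = c_i e^{-\rho_i t}p_i^+(t)$ for $i \geq 1$ (with the analogous expression for $i=0$). The Taylor expansion of $R$ evaluated at a finite linear combination of terms of the form $t^k e^{-\nu t}q(t)X(\theta)$ (with $q$ periodic, $X$ a spherical harmonic, and $\nu$ a sum of $\rho_i$'s) again produces terms of this form, whose exponential rates all lie in $\mathcal I$; by Lemma~\ref{lemma-SphericalHarmonics}, at each fixed rate only finitely many spherical-harmonic modes appear.

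Enumerate $\mathcal I \cap (0, \mu) = \{\mu_1 < \cdots < \mu_N\}$, a finite set since $\mathcal I$ is discrete. Set $\widetilde\eta_0 = 0$ and, at step $j$, let $g_j$ denote the part of $R(\eta + \widetilde\eta_{j-1})$ of decay rate exactly $\mu_j$ (allowing polynomial factors in $t$). By the previous paragraph, $g_j$ is a finite sum of modes $t^k e^{-\mu_j t}q(t)X_l(\theta)$ with $l$ bounded. Solve $L w_j = -g_j$ mode by mode by variation of parameters, as in the formulas \eqref{eq-expression-w-i} and \eqref{eq-expression-w-0}, choosing the integration limits according to the sign of $\rho_i - \mu_j$ so that $w_j$ decays precisely at rate $\mu_j$ (with an extra factor of $t$ when $\mu_j = \rho_i$ for some $i$). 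Setting $\widetilde\eta_j = \widetilde\eta_{j-1} + w_j$, one has
\[
\mathcal N(\psi + \eta + \widetilde\eta_j) = \bigl[\mathcal N(\psi + \eta + \widetilde\eta_{j-1}) - g_j\bigr] + \bigl[R(\eta + \widetilde\eta_{j-1} + w_j) - R(\eta + \widetilde\eta_{j-1})\bigr];
\]
the first bracket is of order strictly higher than $e^{-\mu_j t}$ by choice of $g_j$, and the second bracket is $O(e^{-(\mu_j + 1)t})$ since $R_\xi(\eta + \widetilde\eta_{j-1}) = O(e^{-t})$ and $w_j = O(t^k e^{-\mu_j t})$. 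Hence the new error has leading decay at least $\mu_{j+1}$. After $N$ steps the error decays at rate $\mu_{N+1} > \mu$, giving \eqref{eq-assumption-approximate-v}; the gradient bound follows from the same inductive scheme applied to $\nabla$, and \eqref{eq-assumption-leading-v} is immediate from the decay of $\eta$ and each $w_j$.

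The main technical obstacle is the simultaneous tracking of exponential and polynomial-in-$t$ factors through the iteration, together with the handling of modes for which $\rho_i > \mu_j$ (for these the integrals in \eqref{eq-expression-w-i} do not converge as written). This is managed by splitting the variation-of-parameters formula into one integral from $t_0$ to $t$ and one from $t$ to $\infty$, choosing the splitting differently for $\rho_i < \mu_j$ and $\rho_i > \mu_j$; the resonant case $\mu_j = \rho_i$ introduces a single additional $t$-factor which is absorbed by the polynomial bookkeeping. Since $\mathcal I \cap (0, \mu)$ is finite, the iteration terminates in $N$ steps and produces the desired $\widetilde\eta$.
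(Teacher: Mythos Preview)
Your proposal is correct and follows essentially the same strategy as the paper's proof: expand $\mathcal N(\psi+\phi)$ via Taylor's formula, observe that the nonlinear remainder produces only terms with exponential rates in $\mathcal I$ and finitely many spherical-harmonic modes at each rate (Lemma~\ref{lemma-SphericalHarmonics}), and then iteratively kill each rate below $\mu$ by solving the projected ODEs $L_m w = g$ via the variation-of-parameters formulas \eqref{eq-expression-w-i}--\eqref{eq-expression-w-0}, with the resonant case $\mu_j=\rho_m$ producing an extra polynomial factor. The paper organizes the bookkeeping slightly differently---it first reduces $\eta$ to a finite sum, separates the subset $\mathcal I_{\widetilde\rho}\subset\mathcal I$ of rates that actually arise from the quadratic remainder, and treats the nonresonant case $\mathcal I_\rho\cap\mathcal I_{\widetilde\rho}=\emptyset$ before the general one---but the underlying mechanism is identical to yours.
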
 

\begin{proof} In the proof below, we adopt the notation $f=O(g)$ if $|f|\le Cg$. 
We first decompose the index set $\mathcal I$. Set 
$$\mathcal I_\rho=\{\rho_j:\, j\ge 1\},$$ 
and 
$$\mathcal I_{\widetilde \rho}=\Big\{\sum_{i=1}^kn_i\rho_i:\, n_i\in \mathbb Z_+, \sum_{i=1}^kn_i\ge 2\Big\}.$$
We assume $\mathcal I_{\widetilde \rho}$ is given by a strictly increasing sequence 
$\{\widetilde\rho_i\}_{i\ge 1}$, with 
$\widetilde \rho_1=2$. It is possible that $\mathcal I$ and $ \mathcal I_{\widetilde \rho}$ have common elements. 

For each $\widetilde \rho_i\in \mathcal I_{\widetilde \rho}$, 
we consider 
nonnegative integers $n_1, \cdots, n_{k_1}$ such that 
\begin{equation}\label{eq-requirement-m}
n_1+\cdots+n_{k_1}\ge 2, \quad n_1\rho_1+\cdots+n_{k_1}\rho_{k_1}= \widetilde \rho_i.\end{equation}
There are only finitely many collections of 
nonnegative integers $n_1$, $\cdots$, $n_{k_1}$ satisfying \eqref{eq-requirement-m}. 
Set 
\begin{align*}
\widetilde K_i&=\max\{n_1+2n_2+\cdots+k_1n_{k_1}:\\
&\qquad \qquad n_1, \cdots, n_{k_1}\text{ are nonnegative integers 
satisfying \eqref{eq-requirement-m}}\},
\end{align*}
and 
\begin{align}\label{eq-def-M}\widetilde M_i=\max\{m:\, \mathrm{deg}(X_m)\le \widetilde K_i\}.\end{align}
In the following, we only consider the case that $\psi$ is a positive nonconstant period function. 
The case that $\psi$ is a positive constant is easier. 

Take a function $\phi$ such that $|\phi|<\psi$ on $\mathbb R\times \mathbb S^{n-1}$. Then, a simple computation yields 
\begin{align*}
\mathcal N(\psi+\phi)&=L\phi
+\frac14n(n-2)\big[(\psi+\phi)^{\frac{n+2}{n-2}}-\psi^{\frac{n+2}{n-2}}-\frac{n+2}{n-2}\psi^{\frac{4}{n-2}}\phi\big]\\
&=L\phi
+\frac14n(n-2)\psi^{\frac{n+2}{n-2}}\big[(1+\psi^{-1}\phi)^{\frac{n+2}{n-2}}-1-\frac{n+2}{n-2}\psi^{-1}\phi\big].
\end{align*}
Consider the Taylor expansion, for $|s|<1$,  
$$(1+s)^{\frac{n+2}{n-2}}=\sum_{k=0}^\infty a_ks^k,$$ 
where $a_k$ is a constant. 
Here, we write 
the infinite sum just for convenience. We do not need the convergence of 
the infinite series and we always expand up to finite orders. 
Then, by renaming the constant $a_k$, we have 
\begin{align}\label{eq-expansion-N}\mathcal N(\psi+\phi)=L\phi
+\sum_{k=2}^\infty a_k\psi^{\frac{n+2}{n-2}-k}\phi^k.
\end{align}
We point out that the summation above starts from $k=2$.

Let  $I$ be the largest integer such that $\rho_I+\rho_1<\mu$ and 
$\widetilde{I}$ be the largest integer such that $\widetilde\rho_{\widetilde I}<\mu$.  
By Lemma \ref{lemma-Asymptotics-U2a}, Ker$(L_0)$ has no functions converging to 0 as $t\to \infty$
and for $i\ge 1$, Ker$(L_i)$ has one function 
$\psi_i^+(t)=e^{-\rho_i t}p^+_i(t)$  decaying  
exponentially and one function $\psi_i^-(t)=e^{-\rho_i t}p^-_i(t)$ growing exponentially. 
Without loss of generality, 
we may assume that the given solution $\eta$ of $L\eta=0$ has the form
\begin{equation}\label{eq-approximation1}\eta(t,\theta)
=\sum_{i=1}^Ic_ip^+_i(t)X_i(\theta)e^{-\rho_it},\end{equation}
where $c_i$ is a constant. 
This is because that any term of the form $e^{-\rho_it}$ with $i>I$ in $\eta$ 
will contribute terms only of the form $e^{-\widetilde{\rho}_it}$ with $\widetilde{\rho}_i>\mu$ in $\mathcal N(\psi+\eta)$.

We first consider the case that 
\begin{equation}\label{eq-special}\mathcal I_\rho\cap \mathcal I_{\widetilde \rho}=\emptyset.\end{equation}
In other words, no $\rho_i$ can be written 
as a linear combination of some of $\rho_1, \cdots, \rho_{i-1}$ with positive integer coefficients, except a single 
$\rho_{i'}$ which is equal to $\rho_i$. 

We will prove that we can find 
$\widetilde{\eta}_0, \widetilde{\eta}_1, \cdots, \widetilde{\eta}_{\widetilde I}$  successively such that,  
for any $k=0, 1, \cdots, \widetilde I$,  
\begin{equation}\label{eq-approximate-induction-i}
\mathcal N(\psi+\eta+\widetilde{\eta}_0+ \cdots+\widetilde{\eta}_k)=O(e^{-\widetilde{\rho}_{k+1} t}).\end{equation}

We first take $\phi$ to be $\eta$ as in \eqref{eq-approximation1}. By \eqref{eq-expansion-N} and $L\eta=0$, we have 
\begin{align*}\mathcal N(\psi+\eta)=\sum_{n_1+\cdots+n_{k_1}\ge 2}a_{n_1\cdots n_{k_1}}(t)
e^{-(n_1\rho_1+\cdots+n_{k_1}\rho_{k_1})t}X_1^{n_1}\cdots X_{k_1}^{n_{k_1}},\end{align*}
where $n_1, \cdots, n_{k_1}$ are nonnegative integers, and $a_{n_1\cdots n_{k_1}}$ 
is a smooth periodic function. By the definition of $\mathcal I_{\widetilde \rho}$, 
$n_1\rho_1+\cdots+n_{k_1}\rho_{k_1}$ is some 
$\widetilde \rho_i$. 
Hence, by Lemma \ref{lemma-SphericalHarmonics}, 
\begin{equation}\label{eq-property-I}
\mathcal N(\psi+\eta)=\sum_{i=1}^{\widetilde{I}}
\Big\{\sum_{m=0}^{\widetilde M_i}
a_{im}(t)X_{m}(\theta)\Big\}e^{-\widetilde \rho_it}+O(e^{-\widetilde \rho_{\widetilde I+1}t}),\end{equation}
where $\widetilde M_k$ is defined in \eqref{eq-def-M}, and $a_{im}$ is a smooth periodic function. In particular,
$$\mathcal N(\psi+\eta)=O(e^{-\widetilde \rho_1t}).$$
Hence, \eqref{eq-approximate-induction-i} holds for $k=0$ with $\widetilde{\eta}_0=0$. 

Suppose $\widetilde{\eta}_0, \widetilde{\eta}_1, \cdots, \widetilde{\eta}_{k-1}$ are already constructed so that 
\eqref{eq-approximate-induction-i} holds for $0, 1, \cdots, k-1$. We now consider $k$. Let $\widetilde{\eta}_{k}$
be a function of the form 
\begin{equation}\label{eq-Asymptotic-Uk-22}
\widetilde\eta_{k}(t,\theta)=
\Big(\sum_{m=0}^{\widetilde M_k} 
c_{km}(t)X_{m}(\theta)\Big)e^{-\widetilde \rho_kt},
\end{equation}
where $c_{km}$ is a smooth periodic function to be determined. 
A computation similar as that leading to \eqref{eq-property-I} yields 
\begin{align*}\mathcal N(\psi+\eta+\widetilde{\eta}_0+ \cdots+\widetilde{\eta}_k)&=
L\widetilde{\eta}_1+ \cdots+L\widetilde{\eta}_k\\
&\qquad+\sum_{i=1}^{\widetilde I}
\Big\{\sum_{m=0}^{\widetilde M_i}
a_{im}(t)X_{m}(\theta)\Big\}e^{-\widetilde \rho_it}+O(e^{-\widetilde \rho_{\widetilde I+1}t}),\end{align*}
where $a_{im}$ is a smooth periodic function, which may be different from that in \eqref{eq-property-I}. 
By the induction hypothesis, we have 
$$\mathcal N(\psi+\eta+\widetilde{\eta}_0+ \cdots+\widetilde{\eta}_k)=
L\widetilde{\eta}_k+\sum_{i=k}^{\widetilde I}
\Big\{\sum_{m=0}^{\widetilde M_i}
a_{im}(t)X_{m}(\theta)\Big\}e^{-\widetilde \rho_it}+O(e^{-\widetilde \rho_{\widetilde I+1}t}).$$ 
We point out that $\widetilde{\eta}_k$ does not contribute to the coefficient $a_{km}$. We 
take $\widetilde{\eta}_k$ such that 
$$L\widetilde{\eta}_k=-
\Big\{\sum_{m=0}^{\widetilde M_k}
a_{km}(t)X_{m}(\theta)\Big\}e^{-\widetilde \rho_kt}.$$ 
Then, we have \eqref{eq-approximate-induction-i} for $k$. With $\widetilde{\eta}_k$ in \eqref{eq-Asymptotic-Uk-22}, 
it suffices to solve, for $m=0, 1, \cdots, \widetilde{M}_k$,  
\begin{equation}\label{eq-solving-linear1}L_m(c_{km}(t)e^{-\widetilde \rho_kt})=a_{km}(t)e^{-\widetilde \rho_kt}.\end{equation}
Since $\rho_m\neq\widetilde\rho_k$ for any $m$ and $k$, we can find a periodic solution $c_{km}$ of 
\eqref{eq-solving-linear1}. 
In fact, there is an explicit formula for $c_{km}(t)e^{-\widetilde \rho_kt}$ in terms of $a_{km}(t)e^{-\widetilde \rho_kt}$ 
with help of the basis of the kernel Ker$(L_m)$ as in 
Lemma \ref{lemma-Asymptotics-U2a}. 
If $0<\rho_m<\widetilde{\rho}_k$, such an expression is given by \eqref{eq-expression-w-i}. 
If $\rho_m>\widetilde{\rho}_k$, we simply replace the first integral in \eqref{eq-expression-w-i} with 
the one from $t_0$ to $t$. 
If $m=0$, such an expression is given by \eqref{eq-expression-w-0}. 
This finishes the induction. In conclusion, we set 
\begin{equation}\label{eq-approximation2}
\widetilde\eta(t,\theta)=\sum_{i=1}^{\widetilde I}
\Big\{\sum_{m=0}^{\widetilde{M}_i}
c_{im}(t)X_{m}(\theta)\Big\}e^{-\widetilde\rho_it}.\end{equation}
Then, 
$$\mathcal N(\psi+\eta+\widetilde \eta)=O(e^{-\widetilde \rho_{\widetilde I+1}t})=O(e^{-\mu t}).$$
We have a similar estimate for the gradient of $\mathcal N(\psi+\eta+\widetilde \eta)$. 
This finishes the proof for the case \eqref{eq-special}. 
 
Next, we consider the general case; namely, some $\rho_i$ can be written 
as a linear combination of some of $\rho_1, \cdots, \rho_{i-1}$ with positive integer coefficients. 
We indicate how to modify discussion above to treat the general case. 
The modification mainly concerns \eqref{eq-solving-linear1}. If $\rho_m=\widetilde{\rho}_k$, then for a periodic function 
$a_{km}$, instead of \eqref{eq-solving-linear1}, we can find periodic functions 
$c_{k0m}$ and $c_{k1m}$ such that 
$$L_m\big((c_{k0m}(t)+tc_{k1m})e^{-\widetilde \rho_kt}\big)=a_{km}(t)e^{-\widetilde \rho_kt}.$$
Note that an extra power of $t$ appears when solving the above ordinary differential equation. 
Such a power of $t$ will generate more powers of $t$ upon iteration. In general, for periodic functions 
$a_{kjm}$ with $j=0, \cdots, J$ for some nonnegative integer $J$, 
we can find periodic functions $c_{kjm}$ with $j=0, \cdots, J+1$ such that 
$$L_m\Big(\sum_{j=0}^{J+1}c_{kjm}(t)t^je^{-\widetilde \rho_kt}\Big)
=\sum_{j=0}^{J}a_{kjm}(t)t^je^{-\widetilde \rho_kt}.$$
In conclusion, instead of \eqref{eq-approximation2}, we will take 
\begin{equation}\label{eq-approximation3}
\widetilde\eta(t,\theta)=\sum_{i=1}^{\widetilde I}\sum_{j=0}^{i}
\Big\{\sum_{m=0}^{\widetilde{M}_i}
c_{ijm}(t)X_{m}(\theta)\Big\}t^je^{-\widetilde\rho_it},\end{equation}
where $c_{ijm}$ is a smooth periodic function. 
\end{proof} 

In \cite{HLL201?}, we proved that any positive solution $u$ of \eqref{eq-Yamabe} in $B_1\setminus\{0\}$,
with a nonremovable singularity at the origin, satisfies 
\eqref{eq-estimate-u-k}. In fact, an obviously revised version of \eqref{eq-estimate-u-k} holds 
for approximate solutions of appropriate order. 
The proof of Proposition \ref{prop-approximate-solution} 
above is the converse procedure used in the proof of \eqref{eq-estimate-u-k} in \cite{HLL201?}.

\end{document}